\theoremstyle{plain}
\newtheorem{thm}{Theorem}[section]
\newtheorem{prop}[thm]{Proposition}
\newtheorem{lem}[thm]{Lemma}
\newtheorem{defn}[thm]{Definition}
\newtheorem*{conjecture*}{Conjecture}
\newcommand{\inner}[1]{{#1}^{\circ}}
\renewcommand{\outer}[1]{\partial{#1}}
\newtheorem{claim}{Claim}
\title{Decomposing $4$-connected planar triangulations into two trees and one path}
\author[1]{Kolja Knauer\thanks{K.K was partially supported by ANR GATO ANR-16-CE40-0009-01.}} 
\author[2]{Torsten Ueckerdt}
\affil[1]{Aix Marseille Univ, Universit\'e de Toulon, CNRS, LIS, Marseille, France}
\affil[2]{Karlsruhe Institute of Technology (KIT), 
Institute of Theoretical Informatics}
\begin{document}

\maketitle

\begin{abstract}
 Refining a classical proof of Whitney, we show that any $4$-connected planar triangulation can be decomposed into a Hamiltonian path and two trees.
 Therefore, every $4$-connected planar graph decomposes into three forests, one having maximum degree at most~$2$.
 We use this result to show that any Hamiltonian planar triangulation can be decomposed into two trees and one spanning tree of maximum degree at most~$3$.
 These decompositions improve the result of Gon\c{c}alves [Covering planar graphs with forests, one having bounded maximum degree. \textit{J.\ Comb.\ Theory, Ser.\ B}, 100(6):729--739, 2010] that every planar graph can be decomposed into three forests, one of maximum degree at most~$4$.
 We also show that our results are best-possible.
\end{abstract}

\section{Introduction}

All graphs considered here are finite, undirected, and simple, i.e., contain no loops nor multiple edges.
The \emph{fractional arboricity} $a_f(G):=\max_{S\subseteq V; |S|\geq 2}\frac{|E(S)|}{|S|-1}$ of a graph $G = (V,E)$ (sometimes also denoted by $\Upsilon_f(G)$) is a classical measure of density, introduced by Payan~\cite{Pay-86}.
The famous Nash-Williams Theorem~\cite{Nas-61} says that the edges of any graph $G$ can be decomposed into $\lceil a_f(G)\rceil$ forests.
While the number of forests cannot be reduced, it might still be possible to improve the decomposition by imposing a low maximum degree on one forest.

For positive integers $k,d$, a graph is called \emph{$(k,d)^\star$-decomposable} if its edges decompose into $k+1$ forests, one of maximum degree at most $d$.
A weaker notion than $(k,d)^\star$-decomposable is the following:
A graph is called \emph{$(k,d)$-decomposable} if its edges decompose into $k$ forests and one subgraph of maximum degree at most $d$.
Let us also define the following notion that is stronger than $(k,d)^\star$-decomposability:
A graph is called \emph{$[k,d]^\star$-decomposable} if its edges decompose into $k$ forests and one tree of maximum degree at most $d$.

Affirming a conjecture of Balogh~\textit{et al.}~\cite{Bal-05}, Gon{\c{c}}alves~\cite{Gon-09} proves that every planar graph is $(2,4)^\star$-decomposable, while large enough planar $3$-trees are not $(2,3)$-decomposable~\cite{Bal-05}.

\paragraph{Our Results.}

In this paper we consider $[k,d]^\star$-decompositions of maximally planar graphs, which we will call planar triangulations, or just triangulations for short.
First, we improve the classical result of Whitney~\cite{Whi-32} that every $4$-connected triangulation is Hamiltonian.
Recall that a graph $G = (V,E)$ is $4$-connected if $|V| \geq 5$ and for any triple $S$ of vertices the graph $G - S$ is connected.
Moreover, a Hamiltonian path in $G$ is a simple path in $G$ that contains all vertices of $G$, and $G$ is Hamiltonian if it contains at least one Hamiltonian path.

\begin{thm}\label{thm:2t1p}
 Every $4$-connected planar triangulation $G$ decomposes into two trees and one Hamiltonian path. In particular, $G$ is $[2,2]^\star$-decomposable.
\end{thm}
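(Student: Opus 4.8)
The plan is to refine Whitney's inductive proof of Hamiltonicity so that it produces not only the spanning path but, simultaneously, the two trees. Whitney's argument is naturally phrased for \emph{triangulated disks}: $2$-connected plane near-triangulations $D$ (all bounded faces triangles) whose outer boundary is a cycle $C$ and which contain no separating triangle. Since $G$ is $4$-connected it has minimum degree at least $4$ and no separating triangle, so after choosing any face as the outer triangle the whole graph is such a disk. The induction produces a Hamiltonian path between two prescribed boundary vertices $a,b \in V(C)$, either by reading off a vertex whose neighbourhood on $C$ splits $C$, or by cutting $D$ along a chord of $C$ into two smaller disks $D_1,D_2$ that again satisfy the hypotheses, recursing, and concatenating the resulting subpaths at their shared endpoint. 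I would take exactly this recursion as the skeleton.

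The refinement is to strengthen the inductive statement to the assertion that there exist a Hamiltonian $a$--$b$ path $P$ and a partition of $E(D)\setminus E(P)$ into two trees $T_1,T_2$ satisfying an \emph{interface condition} with $C$. This condition is the heart of the matter: it must record enough about how each $T_i$ touches the two arcs of $C$ (and the endpoints $a,b$) that, whenever two disks are glued along a shared chord $uv$, the corresponding tree pieces meet in \emph{exactly one} common vertex and the chord can be assigned to one of them without creating a cycle. Concretely I would anchor $T_1$ on one arc of $C$ and $T_2$ on the other, and prescribe, for each tree and each side of a prospective cut, which of $u,v$ it is allowed to contain. A short count ($3|V(D)|-6 = (|V(D)|-1) + |E(T_1)| + |E(T_2)|$, so $|E(T_1)|+|E(T_2)| = 2|V(D)|-5$) confirms the invariant is dimensionally consistent and dictates how many boundary vertices each tree may omit.

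In the inductive step I would glue as follows. The two Hamiltonian subpaths share the endpoint forced by the cut and concatenate into a single Hamiltonian path of $D$. For the trees, $T_1 := T_1^{(1)} \cup T_1^{(2)}$ (and likewise $T_2$) is connected because the interface condition guarantees that $T_1^{(1)}$ and $T_1^{(2)}$ share one of the cut vertices; it is acyclic because $D_1$ and $D_2$ meet only in $\{u,v\}$, so a cycle in $T_1$ would have to use both $u$ and $v$ on both sides, which the interface condition forbids. The chord $uv$, when present and off $P$, is handed to whichever tree preserves this property. Verifying that every case of Whitney's analysis admits such a consistent assignment, and that the interface condition can always be re-established for the smaller disks, is where the real work lies; the base case is a single triangle (empty trees) or $K_4$, checked by hand.

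The step I expect to be the main obstacle is precisely the design of the interface condition: it must be weak enough to hold in the base cases and to be inherited by \emph{both} subdisks after every type of cut, yet strong enough to force the two tree pieces to overlap in a single vertex so that their union is again a tree. Obtaining connectivity \emph{and} acyclicity simultaneously across all gluings---rather than merely two forests---is the crux, and I would expect to spend most of the effort tuning this invariant and dispatching the handful of cases in which the neighbour of a boundary vertex, or a chord, lands awkwardly relative to $a$ and $b$.
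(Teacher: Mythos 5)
Your high-level strategy---refining Whitney's induction so that it outputs the two extra forests along with the Hamiltonian path---is exactly the paper's strategy, but the invariant you propose to carry through the induction is different from the paper's in a way that matters, and the part you defer (``designing the interface condition'') is the entire technical content of the proof. The paper's key lemma (Lemma~\ref{lem:Whitney}) does \emph{not} maintain two trees on the sub-disks. It maintains an orientation and a red/blue colouring of the non-path edges in which every inner vertex has out-degree exactly one in each colour, every boundary vertex has out-degree one in a single prescribed colour depending on which of the three boundary arcs it lies on, and there is no monochromatic directed cycle. Each colour class is therefore a \emph{rooted forest whose roots are confined to prescribed boundary arcs}, with unboundedly many components (essentially one root per vertex of two of the three arcs). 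The two trees are only assembled at the very last step, outside the induction, by joining all the red roots and all the blue roots to the three vertices of the outer triangle (Lemma~\ref{lem:inner-decomposition}). This design is what makes the gluing painless: a union of rooted forests with compatible out-degrees and no directed monochromatic cycle is again such a forest, with no need for the pieces to intersect in exactly one vertex.

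Your invariant---that $E(D)\setminus E(P)$ is already partitioned into two \emph{trees} of the sub-disk, glued by forcing single-vertex overlaps at a cut chord---is strictly stronger than what the paper maintains, and there are two concrete reasons to doubt it can be pushed through. First, your gluing rule is tailored to a cut along one chord into two pieces, but the hardest case of Whitney's recursion (Case~7 in the paper) cuts the disk along a shortest path into $k+2$ pieces $G',G'',G_1,\dots,G_k$; making a union of $k+2$ trees into a single tree requires an intersection pattern far more delicate than ``share exactly one of $u,v$,'' and nothing in your sketch addresses it. Second, your own edge count shows the two trees must jointly omit exactly three boundary-vertex slots of $D$; after a single chord cut the two sides jointly omit six slots and the bookkeeping only balances if the chord and the shared endpoints are distributed in exactly the right way, which already forces a nontrivial case analysis you have not carried out, and which becomes worse in the multi-piece case. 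So as written this is a plan rather than a proof: the missing idea is precisely the paper's move of replacing ``two trees with an interface condition'' by ``two out-degree-one rooted forests with roots on prescribed arcs, acyclicity certified by the absence of monochromatic directed cycles,'' deferring connectivity to a single final step.
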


Combining this result with recursive decompositions of non-$4$-connected triangulations, we obtain decompositions of Hamiltonian triangulations.

\begin{thm}\label{thm:hamil}
 Every Hamiltonian planar triangulation $G$ decomposes into two trees and a spanning tree of maximum degree~$3$. In particular, $G$ is $[2,3]^\star$-decomposable.
\end{thm}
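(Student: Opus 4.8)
The plan is to reduce the Hamiltonian case to the $4$-connected case of Theorem~\ref{thm:2t1p} via a recursive decomposition along separating triangles. Since $G$ is a Hamiltonian planar triangulation, fix a Hamiltonian path (or cycle) $P$ in $G$. The key structural fact about triangulations is that the non-$4$-connectedness is witnessed by separating triangles: a triangulation on at least $5$ vertices is $4$-connected if and only if it contains no separating triangle (a $3$-cycle with vertices both inside and outside). So my first step is to set up an induction on the number of vertices, where the base case is either a small triangulation handled directly or, more substantially, a $4$-connected triangulation where Theorem~\ref{thm:2t1p} applies directly to give two trees and a Hamiltonian path, which in particular is a spanning tree of maximum degree~$2 \le 3$.

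\smallskip

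For the inductive step, suppose $G$ has a separating triangle $T = \{x,y,z\}$. This splits $G$ into two triangulations $G_1$ (the triangle together with everything inside) and $G_2$ (the triangle together with everything outside), each of which is again a triangulation sharing exactly the triangle $T$, with $G = G_1 \cup G_2$ and $E(G_1) \cap E(G_2) = \{xy, yz, zx\}$. First I would argue that each $G_i$ is again Hamiltonian, or more precisely that the Hamiltonian structure of $G$ restricts to a usable path structure on each piece. The crucial obstacle here is that a Hamiltonian path of $G$ need not restrict to a Hamiltonian path of $G_i$: it may enter and leave the interior of $T$ several times. So rather than passing the global path down, I would prove the stronger statement that the desired $[2,3]^\star$-decomposition exists with the spanning tree $S$ of maximum degree~$3$ respecting the triangle, and then glue.

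\smallskip

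The gluing is the heart of the argument. I would decompose each $G_i$ by induction into two trees $A_i, B_i$ and a spanning tree $S_i$ of $G_i$ with $\Delta(S_i)\le 3$, then combine these across the shared triangle $T$. The three edges $xy,yz,zx$ are shared, and I must distribute them so that the union $A = A_1 \cup A_2$, $B = B_1 \cup B_2$ remain (acyclic) forests/trees and $S = S_1 \cup S_2$ remains a single spanning tree with $\Delta(S) \le 3$. The delicate point is the degree bound: a vertex of $T$ could receive contributions from both $S_1$ and $S_2$, so I must ensure that at each of $x,y,z$ the tree edges incident from the two sides do not push the degree above~$3$. The plan is to arrange, using the freedom in how the three triangle edges are assigned and possibly re-rooting the trees, that each shared vertex has controlled degree in $S_i$ on at least one side, so that the merged $S$ stays within degree~$3$ and remains connected and acyclic (a spanning tree of $G$). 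This bookkeeping at the separating triangle — keeping $A,B$ acyclic while forcing $S$ to be a single spanning tree of bounded degree — is where I expect the main difficulty to lie, and it is presumably where the sharper $[2,3]^\star$ bound (as opposed to $[2,2]^\star$) becomes necessary, since the merge can create a degree-$3$ vertex in the path-turned-tree.
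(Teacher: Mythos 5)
Your high-level plan (recurse along separating triangles, apply Theorem~\ref{thm:2t1p} to the $4$-connected pieces, and control degrees where pieces meet) is the right shape, but the two places where you defer the work are exactly where the proof lives, and as stated your induction does not close. First, your inductive hypothesis is Hamiltonicity, but the pieces $G_1$ and $G_2$ obtained by cutting along a separating triangle $T$ need not be Hamiltonian, so you cannot simply invoke the theorem on them. What Hamiltonicity of $G$ actually gives you is weaker and must be carried explicitly: the Hamiltonian path $P$ meets the interior of each inclusion-maximal separating triangle $\Delta$ in a single subpath $P_\Delta$ whose endpoints lie on $\Delta$, and $P_\Delta$ is a Hamiltonian path of $G_\Delta$ or of $G_\Delta - v$ for one vertex $v$ of $\Delta$. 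The induction must be run on a statement refined according to which of these cases occurs and which vertex of $\Delta$ plays which role; this is what Lemma~\ref{lem:1-assignment}, with its three separate variants, does in the paper. Saying you would ``prove the stronger statement that the decomposition respects the triangle'' names the gap without filling it.

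Second, and more seriously, your degree analysis is local to a single separating triangle, but the real danger is global accumulation: a single vertex $v$ can lie on many separating triangles, and the interior of each one must attach its portion of the bounded-degree spanning tree to the rest of the tree through some vertex of that triangle. If several triangles all charge $v$, its degree in $S$ exceeds $3$ no matter how you distribute the three edges $xy,yz,zx$ of any one triangle. The paper's mechanism for this is the notion of a $k$-assignment (a map sending each filled triangle to one of its vertices, with each vertex receiving at most $k$, or $k{+}1$ in designated cases): Proposition~\ref{prop:decomposition-from-assignment} shows that a $k$-assignment yields maximum degree $k+2$ via the per-piece decomposition of Lemma~\ref{lem:inner-decomposition}, and Lemma~\ref{lem:1-assignment} uses the Hamiltonian path (oriented toward a fixed special vertex) to produce a $1$-assignment. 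Without some such global accounting of which triangle charges which vertex, the claim ``each shared vertex has controlled degree on at least one side'' cannot be established, and this is the missing idea in your proposal.
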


Furthermore, our methods give a new proof of (a slight strengthening of) Gon\c{c}alves' result.

\begin{thm}\label{thm:planar}
 Every planar triangulation $G$ decomposes into two trees and a spanning tree of maximum degree~$4$. In particular, $G$ is $[2,4]^\star$-decomposable.
\end{thm}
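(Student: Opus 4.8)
The plan is to argue by induction on $n=|V(G)|$, using the decomposition of a triangulation along its separating triangles to reduce to the $4$-connected case. If $G$ is $4$-connected, then Theorem~\ref{thm:2t1p} already yields a decomposition into two trees and a Hamiltonian path, and since a Hamiltonian path is a spanning tree of maximum degree $2\le 4$, such $G$ is $[2,4]^\star$-decomposable. The base case $n=4$ is $G=K_4$, which is Hamiltonian and hence covered by Theorem~\ref{thm:hamil}. So the only real work is the case $n\ge 5$ with $G$ not $4$-connected, where $G$ necessarily carries a separating triangle $T=xyz$.

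To make the induction go through I would first strengthen the statement to carry information about the outer face. For a triangulation $H$ with a prescribed facial triangle $xyz$, the strengthened claim asks for a decomposition into two trees $A,B$ and a spanning tree $C$ of maximum degree $\le 4$ such that (i) the three edges $xy,yz,zx$ lie in the three distinct classes, say $xy\in C$, $yz\in A$, $zx\in B$, and (ii) $C$ has degree at most $2$ at each of $x,y,z$. Condition (ii) is the degree budget that will let the maximum degree of the spanning tree climb to, but never above, $4$ when pieces are glued; note that the two trees $A,B$ need not be spanning, which is consistent with the edge count $3n-6$.

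The recursive step then splits $G$ along $T$ into the inside part $G_1$ (everything drawn inside $T$, together with $T$) and the outside part $G_2$ (everything outside $T$, together with $T$). Both are triangulations with $T$ a facial triangle and fewer vertices than $G$, so the strengthened hypothesis applies to each; re-embedding so that $T$ is the outer triangle, I would align the two decompositions so that they colour $xy,yz,zx$ in the same way, and then take unions class by class to obtain candidate trees $A=A_1\cup A_2$, $B=B_1\cup B_2$ and spanning tree $C=C_1\cup C_2$ for $G$. The spanning tree behaves well: the two pieces share only the single $C$-coloured edge $xy$, so $C$ is connected and acyclic, its degree at the shared vertices is at most $2+2-1=3$, and at the opposite vertex $z$ at most $2+2=4$ --- this is exactly the point at which the value~$4$ (rather than the $3$ available in the Hamiltonian setting of Theorem~\ref{thm:hamil}) arises.

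The hard part will be to verify that this gluing really preserves \emph{all} the invariants, and this is the technical heart of the argument. For $A$ and $B$ one must guarantee the unions stay trees: $A_1\cap A_2$ is just the edge $yz$ together with possibly the isolated vertex $x$, so acyclicity requires a further invariant ensuring that the outer vertex not meeting the $A$-coloured (resp.\ $B$-coloured) triangle edge is touched by at most one side. More seriously, the bound ``$\le 2$ at outer vertices'' is \emph{not} inherited after a gluing --- a glued-in vertex may reach $C$-degree $4$ --- so when a vertex lies on several separating triangles the local contributions could in principle accumulate past $4$. Controlling this global degree requires exploiting the planar rotation structure around each vertex (the separating triangles through $v$ are chords of its link cycle, and the edges of $v$ in each piece form an arc of that cycle) and choosing the outer triangle of $G$ together with the order of gluing so that condition (ii) is only ever imposed where it still holds. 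Finally I would have to check that the decomposition delivered by Theorem~\ref{thm:2t1p} in the $4$-connected base case can be arranged to satisfy (i) and (ii); this seeds the induction and closes the proof.
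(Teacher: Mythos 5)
Your overall strategy --- recurse along separating triangles, decompose each $4$-connected piece via Theorem~\ref{thm:2t1p}, and glue --- is the same skeleton the paper uses (Proposition~\ref{prop:decomposition-from-assignment}), but two essential points are missing, and one step as written fails. First, the gluing of the spanning trees does not work with your invariant: if $C_1$ and $C_2$ are spanning trees of the inside and outside pieces and both contain all three vertices $x,y,z$ of the separating triangle, then $C_1$ contains an $x$--$z$ path running through the inside and $C_2$ an $x$--$z$ path running through the outside; since the only shared edge in class $C$ is $xy$, these two paths are internally disjoint and $C_1\cup C_2$ contains a cycle. The paper's Lemma~\ref{lem:inner-decomposition} is designed precisely to avoid this: the contribution of each piece $G_\Delta-E(\Delta)$ to the spanning tree is a Hamiltonian path of $(G_\Delta-E(\Delta))\setminus\{v_y,v_z\}$, i.e., it meets the separating triangle in only the single vertex $\phi(\Delta)$ and attaches as a pendant path, while the other two forests likewise each avoid one or two corners of the triangle. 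You would need to build this ``avoid some corners'' property into your strengthened induction hypothesis in place of condition (i).

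Second, the point you yourself flag as the technical heart --- preventing the $C$-degree of a vertex lying on many separating triangles from accumulating past $4$ --- is exactly the content the proof cannot do without, and your proposal leaves it unresolved; ``choosing the outer triangle together with the order of gluing so that condition (ii) is only ever imposed where it still holds'' is not an argument. The paper's mechanism is the notion of a $2$-assignment (Lemma~\ref{lem:2-assignment}): each filled triangle $\Delta$ is charged to one of its vertices $\phi(\Delta)$, the spanning-tree degree of a vertex $v$ comes out as $2+|\phi^{-1}(v)|$ (or $1+|\phi^{-1}(v)|$ in the favourable cases), and an assignment with $|\phi^{-1}(v)|\le 2$ exists because, by Lemma~\ref{lem:inner-face-assignment}, in a straight-line drawing each inner vertex of a $4$-connected piece is the ``middle-height'' vertex of exactly two inner faces. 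Since every separating triangle of $G$ is an inner face of exactly one piece $\outer{G_\Delta}$ and every vertex of $G$ is an inner vertex of at most one such piece, the charges cannot pile up. Without a global charging scheme of this kind your induction does not close, so the proposal as it stands is not a proof.
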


Finally, we show that all our results are best-possible, where in the last case we provide a family of examples that is richer than just planar $3$-trees, as given in~\cite{Bal-05}.

\begin{thm}\label{thm:best-possible}
 Each of the following holds.
 \begin{enumerate}[label = (\roman*)]
  \item Some $4$-connected planar triangulations are not $(2,1)$-decomposable.\label{enum:4-connected-not-2,1}
  
  \item Some Hamiltonian planar triangulations are not $(2,2)$-decomposable.\label{enum:Hamiltonian-not-2,2}
  
  \item Some planar triangulations are not $(2,3)$-decomposable.\label{enum:planar-not-2,3}
 \end{enumerate}
\end{thm}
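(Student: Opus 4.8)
The plan is to establish each of the three lower-bound statements by exhibiting explicit families of triangulations and arguing, via an edge-counting argument, that no decomposition of the required type can exist. The unifying principle I would use is a density argument: in any decomposition of a graph into two forests and one subgraph $H$ of maximum degree at most $d$, the two forests on $n$ vertices contribute at most $2(n-1)$ edges, so $H$ must carry at least $|E(G)| - 2(n-1)$ edges. Since a triangulation on $n$ vertices has exactly $3n-6$ edges, the subgraph $H$ must contain at least $(3n-6) - 2(n-1) = n-4$ edges. A subgraph of maximum degree at most $d$ on $n$ vertices has at most $\lfloor dn/2 \rfloor$ edges, which for the relevant small $d$ gives almost no slack, so the obstruction is forced to be quite local and structural rather than merely global.

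For part~\ref{enum:4-connected-not-2,1}, I would need a $4$-connected triangulation that is not $(2,1)$-decomposable, i.e.\ admits no decomposition into two forests and a matching. Here the global count alone is tight but insufficient ($n-4$ edges can fit in a matching once $n\geq 8$), so I expect the real work to be a local argument: I would look for a small $4$-connected triangulation (for instance the octahedron or a small stacked-then-augmented example) and argue directly by case analysis that the two-forest part cannot absorb enough edges to leave only a matching, exploiting the fact that $4$-connected triangulations have minimum degree~$4$ and hence many triangles that must each lose at least one edge to the non-forest part. The cleanest route is to find a configuration where the matching constraint and the acyclicity constraints on the two forests are simultaneously unsatisfiable on a bounded-size gadget.

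For parts~\ref{enum:Hamiltonian-not-2,2} and~\ref{enum:planar-not-2,3}, I would sharpen the counting by combining it with a degree-sum argument on vertices of high degree. The key observation is that if $G$ contains vertices of very large degree, then in any decomposition into two forests plus a degree-$d$ subgraph, each such high-degree vertex $v$ sees $\deg(v)$ incident edges, of which at most $d$ lie in the bounded-degree part and the remainder must split between the two forests; a forest restricted to the neighbourhood structure around $v$ cannot contain too many of these edges without creating a cycle. I would therefore construct Hamiltonian (resp.\ general) triangulations with a controlled number of high-degree vertices clustered so that the local forest capacity is provably exceeded unless $d$ is at least $2$ (resp.\ $3$). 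For the planar case in~\ref{enum:planar-not-2,3}, I would aim to present a family strictly richer than the planar $3$-trees of Balogh~\textit{et al.}, likely by taking a base triangulation with several interior vertices of high degree and showing that any attempt to route edges into the two forests and a degree-$3$ tree forces a cycle in one forest.

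The main obstacle across all three parts is that the crude global edge count is essentially tight, so each statement genuinely requires a local structural argument rather than a one-line inequality; the delicate step is choosing the gadget and carrying out the case analysis that rules out every assignment of the tight number of edges to the two forests. I would expect part~\ref{enum:4-connected-not-2,1} to be the most technically demanding, since $4$-connectivity restricts the available small examples and forces the incompatibility to emerge from the interaction between the matching condition and acyclicity, whereas in~\ref{enum:Hamiltonian-not-2,2} and~\ref{enum:planar-not-2,3} the larger permitted degree $d$ gives more room and the high-degree-vertex argument should be correspondingly cleaner.
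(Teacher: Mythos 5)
There is a genuine gap, and it starts with an arithmetic slip in part~\ref{enum:4-connected-not-2,1}. You write that ``$n-4$ edges can fit in a matching once $n\geq 8$'' and conclude that the global count is insufficient, but the inequality goes the other way: a matching on $n$ vertices has at most $\lfloor n/2\rfloor$ edges, and $n-4\leq n/2$ only for $n\leq 8$. For every $n\geq 9$ we have $3n-6 > 2(n-1)+n/2$, so \emph{any} $4$-connected triangulation on at least $9$ vertices fails to be $(2,1)$-decomposable by the one-line density count you set up and then discarded. This is exactly the paper's proof of~\ref{enum:4-connected-not-2,1}; the ``local gadget and case analysis'' you propose instead is unnecessary, and since you never specify the gadget or carry out the analysis, that branch of your plan is not a proof as written.

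For parts~\ref{enum:Hamiltonian-not-2,2} and~\ref{enum:planar-not-2,3} the gap is more substantial: the high-degree-vertex heuristic is never turned into an argument, and it is not clear it can be. The paper's mechanism is quite different. It fixes a spanning sub-triangulation $G'\subseteq G$ and counts \emph{tree components}: a spanning subgraph of an $n$-vertex graph whose components are trees and cycles has exactly $n-c$ edges where $c$ is the number of tree components, so the $3n-9$ edges inside a stacked face force exactly $9$ components among the three restricted parts, whence every stacked face must either merge two corners of the face in one of the forests or spend an edge of the bounded-degree part at a corner. Summing these events over all $k$ stacked faces against the budgets $c(F'_i)-c(F_i)$ and the degree bound on $H_3$ yields $k\leq 8$ for $d=2$ and $k\leq n+8$ for $d=3$, which the constructions (stacking a vertex on every other Hamiltonian-cycle edge, resp.\ in every face) violate. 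Your sketch contains none of this bookkeeping, gives no concrete family, and for~\ref{enum:Hamiltonian-not-2,2} does not address the delicate point of keeping the counterexample Hamiltonian --- the paper stacks only on alternate edges of the Hamiltonian cycle precisely so that the cycle can be rerouted through the new vertices. As it stands, parts~\ref{enum:Hamiltonian-not-2,2} and~\ref{enum:planar-not-2,3} of your proposal are a research plan rather than a proof.
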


\paragraph{Related Work.}

While the present paper is focused on planar triangulations and the cases $k=2$, $d \in \{2,3\}$, let us give an account of the history of $(k,d)^\star$-decomposability and $(k,d)$-decomposability for general graphs and general integers $k,d \geq 1$.

One motivation for $(k,d)^\star$-decomposability are applications to bounding the (incidence) game-chromatic number~\cite{He-02,Mon-10,Cha-15} and the spectral radius of a graph~\cite{Dvo-10}.
However, most of the research in this field was inspired by the famous Nine Dragon Tree Conjecture of Montassier, Ossona de Mendez, Raspaud, and Zhu~\cite{Mon-12}, which states that if the difference between $\lceil a_f(G)\rceil$ and $a_f(G)$ is large, then $G$ decomposes into $\lceil a_f(G) \rceil$ forests, where the maximum degree of one forest can be bounded.
More precisely, if $a_f(G)\leq k+\frac{d}{k+d+1}$ for positive integers $k,d$, then $G$ is $(k,d)^\star$-decomposable.
The Nine Dragon Tree Conjecture was proved for several special cases~\cite{Kai-11,Mon-12,Kim-13,Che-17} before it was confirmed in full generality by Jiang and Wang in 2016~\cite{Jia-16}.

The original motivation for the Nine Dragon Tree Conjecture in~\cite{Mon-12} comes as a generalization of decomposition results in sparse planar graphs: 
Planar graphs are known to be $(1,1)^\star$-decomposable, i.e., decompose into a forest and a matching, when they have girth 
at least $8$~\cite{Wan-11,Mon-12}, while some planar graphs of girth~$7$ are not $(1,1)$-decomposable~\cite{Mon-12}.
For $d=2$, He~\textit{et al.}~\cite{He-02} show that planar graphs of girth at least~$7$ are $(1,2)$-decomposable, which was improved to $(1,2)^\star$-decomposability by Gon\c{c}alves~\cite{Gon-09}, while some planar graphs of girth~$5$ are not $(1,2)$-decomposable~\cite{Mon-12}.
In~\cite{He-02} it is further shown that planar graphs of girth at least~$5$ are $(1,4)$-decomposable.
All these decomposition results follow immediately from the Nine Dragon Tree Theorem~\cite{Jia-16} as for every planar graph $G$ of girth at least~$g$ we have $a_f(G) \leq \frac{g}{g-2}$, and thus these decompositions rely purely on the low fractional arboricity of graphs.

However, for planar triangulations the fractional arboricity tends to $3$ as the number of vertices tends to infinity.
Thus the Nine Dragon Tree Theorem does not give $(2,d)^\star$-decomposability of all planar graphs for any fixed $d$.
Hence, for the following results (as well as our results in the present paper) the structure of planar graphs had to be exploited on a different level:
In~\cite{He-02} it is shown that planar graphs are $(2,8)$-decomposable, which is strengthened to $(2,8)^\star$-decomposability in~\cite{Bal-05}.
Moreover, Balogh~\textit{et al.}~\cite{Bal-05} show that Hamiltonian and consequently $4$-connected planar graphs are $(2,6)$-decomposable.
Finally Gon\c{c}alves~\cite{Gon-09} improved these results to $(2,4)^\star$-decomposability of all planar graphs, which is best-possible~\cite{Bal-05}.

\paragraph{Organization of the Paper.}

In Section~\ref{sec:Whitney-lemma} we prove our key lemma, Lemma~\ref{lem:Whitney}, which is crucial for all our decomposition results. 
In Section~\ref{sec:triangle} we decompose any planar triangulation along its separating triangles and introduce triangle assignments.
Here we also combine Lemma~\ref{lem:Whitney} to obtain $[2,k+2]^\star$-decompositions for $k \in \{0,1,2\}$ of planar triangulations admitting so-called $k$-assignments, c.f.\ Proposition~\ref{prop:decomposition-from-assignment}.
In Section~\ref{sec:proofs} we prove our main decomposition results, namely Theorems~\ref{thm:2t1p}--\ref{thm:planar}.
To this end, we show that $4$-connected (respectively Hamiltonian and general) triangulations admit $0$-assignments (respectively $1$-assignments and $2$-assignments) and use the decompositions given by Proposition~\ref{prop:decomposition-from-assignment} in Section~\ref{sec:triangle}.
In Section~\ref{sec:tight} we show that our results are best-possible by constructing $4$-connected (respectively Hamiltonian and general) triangulations that are not $(2,1)$-decomposable (respectively $(2,2)$-decomposable and $(2,3)$-decomposable); In other words, we prove Theorem~\ref{thm:best-possible}.
Finally, we conclude the paper in Section~\ref{sec:conc}.

\section{The Key Lemma}\label{sec:Whitney-lemma}

This section is devoted to the proof of Lemma~\ref{lem:Whitney}, which is a central element of the proofs of all our Theorems.

Let $G$ be a plane embedded graph with a simple outer cycle $C$. Moreover, let $G$ be inner triangulated, that is, every inner face of $G$ is a triangle. For two outer vertices $u,v$ of $G$ we denote by $P_{uv}$ the path from $u$ to $v$ along the outer cycle in counterclockwise direction, and define $\inner{P}_{uv} = P_{uv} \setminus \{u,v\}$. If $u=v$ then $P_{uv}$ consists of only one vertex and $\inner{P}_{uv} = \emptyset$. A \emph{filled triangle} in $G$ is a triple of pairwise adjacent vertices, such that at least one vertex of $G$ lies inside this triangle.
A \emph{separating triangle} in $G$ is a triple of pairwise adjacent vertices, such that at least one vertex of $G$ lies inside this triangle and at least one vertex lies outside this triangle. It is well-known and easy to see that a planar triangulation is $4$-connected if and only if it does not have\footnote{Using planar triangulation instead of plane triangulation is justified as the existence of separating triangles is independent of the chosen plane embedding.} any separating triangles.

\begin{defn}
 A plane inner triangulated graph $G = (V,E)$ with simple outer cycle $C$ is a \emph{Whitney graph with respect to $(x,y,z)$} if $x,y,z$ are outer vertices of $G$ with $z \neq x,y$, such that:
 \begin{itemize}
  \item $G$ contains no filled triangle.
  \item $x,y,z$ appear in this counterclockwise order around $C$.
  \item $P_{xy}$, $P_{yz}$, $P_{zx}$ are induced paths in $G$.
  \item If $x=y$, then $zx$ is not an edge of $G$.
 \end{itemize}
\end{defn}

Note that the outer face of a Whitney graph with at least four vertices cannot be a triangle.
Moreover, any inner triangulated $4$-connected graph with some outer vertices $x,y,z$ in this counterclockwise order is a Whitney graph with respect to $(x,y,z)$. 

\begin{lem}\label{lem:Whitney}
 If $G = (V,E)$ is a Whitney graph with respect to $(x,y,z)$, then the edges of $G$ can be oriented and colored black, red, and blue, such that each of the following holds.
 \begin{enumerate}[label = (\arabic*)]
  \itemsep0pt
  \item The black edges form a directed Hamiltonian path from $x$ to $z$ in $G$.\label{enum:hamil-path}

  \item Every inner vertex has precisely one outgoing red edge and one outgoing blue edge.\label{enum:inner-two}

  \item Every vertex on $\inner{P}_{xy}$ has precisely one outgoing blue edge and no outgoing red edge.\label{enum:xy-blue}

  \item Every vertex on $\inner{P}_{yz}$ has precisely one outgoing red edge and no outgoing blue edge.\label{enum:yz-red}

  \item Every vertex on $\inner{P}_{zx}$ has precisely one outgoing blue edge and no outgoing red edge.\label{enum:zx-blue}

  \item Neither $y$ nor $z$ has outgoing red edges nor outgoing blue edges.\label{enum:y-z-no}

  \item If $x \neq y$, then vertex $x$ has precisely one outgoing blue edge which is $xz$ if it exists and no outgoing red edge.
   If $x=y$, then $x$ has no outgoing blue and no outgoing red edge.\label{enum:x}

  \item There is no monochromatic directed cycle in $G$.\label{enum:acyclic}
 \end{enumerate}
\end{lem}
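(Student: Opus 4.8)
The plan is to prove Lemma~\ref{lem:Whitney} by induction on $|V|$, strengthening the classical Whitney induction so that every recursive step outputs not only a black Hamiltonian $x$--$z$ path but also the red/blue orientation required by (2)--(8). The guiding picture, which I would keep as the induction invariant, is that the red edges form a forest whose roots sit on $P_{zx}\cup P_{xy}\cup\{x\}$ while the blue edges form a forest whose roots sit on $P_{yz}\cup\{y,z\}$; thus the three marked vertices and the three induced arcs record exactly where each forest is permitted to have out-degree~$0$. Conditions (2)--(7) are precisely the per-vertex, per-colour out-degree bookkeeping of this picture, and (8) says that the two colour classes are genuine forests.

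For the base cases I would handle the outer triangle $G=xyz$ directly, orienting $x\to y\to z$ black and $x\to z$ blue, which satisfies (1)--(8) and in particular the clause of (7) that $xz$ be the blue edge out of $x$. The main step splits on whether the outer cycle $C$ has a chord. If a chord $e=uv$ exists, I would pick a convenient one (for instance, innermost on a chosen side) so that $e$ cuts $G$ into two inner-triangulated graphs $G_1,G_2$ meeting exactly in $u,v$ and $e$, each with a simple outer cycle. I would distribute $x,y,z$ and, where needed, $u,v$ as the marks of the two pieces so that both become Whitney graphs: the no-filled-triangle condition and the induced-arc conditions are inherited because any filled triangle or intra-arc chord inside a piece would already be one in $G$. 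Applying the induction hypothesis to $G_1$ and $G_2$ yields a path and a colouring in each; I would then concatenate the two Hamiltonian paths at $u$ or $v$, take the union of the two colourings, and colour/orient the shared edge $e$ according to the arc-role it plays in the merged graph.

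If $C$ has no chord, then since $G$ is inner triangulated with $|V|\ge 4$ it has an interior vertex, and I would reduce by peeling a boundary vertex together with its fan of incident triangles (for example the corner $y$, whose deletion replaces $y$ on $C$ by the path through its interior neighbours). After checking that the resulting graph is again a Whitney graph with suitably relocated marks, I would apply induction and then reinsert the peeled vertex, splicing it into the black path between two consecutive fan neighbours and assigning its remaining incident edges the unique red/blue colours and orientations forced by (2)--(7).

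The \emph{crux} is the seam bookkeeping. In the chord case one must verify that unioning the colourings preserves the exact out-degree counts (3)--(7) at the marked vertices and, above all, at the two seam vertices $u,v$, which lie in both pieces and whose incident edges must be reconciled without creating a second outgoing red or blue edge; choosing the splitting chord so that the marks land correctly (so the arcs stay induced and no filled triangle is introduced) is the most delicate point. Acyclicity (8) is comparatively easy, since in both cases every newly coloured edge leaves a vertex that is otherwise a sink in that colour, so a monochromatic cycle would have to live inside a single piece and contradict the induction hypothesis. Finally, the degenerate configurations---$x=y$, empty arcs $\inner{P}_{xy}$, $\inner{P}_{yz}$ or $\inner{P}_{zx}$, and chords incident to $x$, $y$ or $z$---multiply the case analysis and are where most of the care must go.
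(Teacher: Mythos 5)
Your overall architecture (induction on $|V|$, base triangle, split along a chord of the outer cycle, otherwise reduce) matches the paper's in spirit, and your invariant about where the red and blue forests are allowed to have their roots is the right way to read conditions~(2)--(7). However, there is a genuine gap in the chordless branch, and it is precisely the hard part of Whitney's argument. If the outer cycle $C$ has no usable chord, you propose to peel a boundary vertex (say $y$) together with its fan, apply induction, and then ``splice'' $y$ back into the black Hamiltonian path between two consecutive fan neighbours. The induction hypothesis gives you a Hamiltonian $x$--$z$ path of $G-y$ with \emph{no control whatsoever over which edges it uses}; there is no guarantee that any edge of that path joins two neighbours of $y$, so there may be no valid splice point. (Attaching the peeled vertex at an endpoint of the path, which is how the paper's Cases~1, 4 and~5 reinsert a vertex, only works when the peeled vertex can legitimately become $x$ or $z$ of the larger graph, i.e.\ in special configurations such as $x=y$, $xz\in E$ or $yz\in E$.) The paper resolves the remaining situation (its Case~7) with a two-level decomposition that your proposal is missing: take the subgraph $\tilde G$ of vertices adjacent to $P_{xy}$, choose a shortest path $P$ between its two outer vertices, and cut $G$ into the piece $G'$ beyond $P$, the piece $G''$ cut off by the farthest chord from $\inner{P}_{xy}$ to $\inner{P}_{yz}$, and a chain of blocks $G_1,\dots,G_k$ wedged between $P$ and $P_{xy}$; the Hamiltonian path is then \emph{routed} through the blocks along $P_{xy}$, through $G''$, and finally through $G'$, which is how one forces the path to pass through the seams in the required order.

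A related, secondary flaw sits in your chord branch: for a chord $uv$ with $u\in\inner{P}_{xy}$ and $v\in\inner{P}_{yz}$, the piece containing $y$ has both of its path endpoints on the seam (fine), but the complementary piece containing $x$ and $z$ would need a Hamiltonian $x$--$z$ path that \emph{uses the edge $uv$} so that you can substitute the other piece's path for it --- and again the induction hypothesis cannot prescribe that a particular edge lies on the path. This is why the paper only splits along chords incident to $x$, to $y$, or running from $\inner{P}_{xy}$ to $P_{zx}$ (its Cases~2--6), where one seam vertex can be made an endpoint of a sub-path, and relegates the $\inner{P}_{xy}$--$\inner{P}_{yz}$ chords to the Case~7 machinery. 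Without an argument replacing Case~7, the induction does not close.
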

\begin{proof}
 Recall that our proof is based on the decomposition of planar $4$-connected triangulations by Whitney~\cite{Whi-32}. We do induction on the number $|V|$ of vertices of $G$. 
 If $|V|=3$ then $G$ is a triangle with vertices $x,y$ and $z$.
 We define $P$ to be the path $x-y-z$ and orient $xz$ from $x$ to $z$ and color it blue.
 It is easy to see that~\ref{enum:hamil-path}--\ref{enum:acyclic} are satisfied.
 If $|V| > 3$ we distinguish seven cases, which we go through in this order, i.e., when considering Case~$i$ we sometimes make use of the fact that Case~$j$ does not apply for $j < i$.
 
 \begin{description}
  \item[Case~1: $x=y$.]{\ \\}
   Let $x'$ and $y'$ be the neighbor of $x$ on $P_{zx}$ and $P_{yz}$, respectively.
   Then $G' = G \setminus \{x\}$ is a Whitney graph with respect to $(x',y',z)$.
   Indeed the vertices on $P_{x'y'}$ are neighbors of $x$ and hence a chord $\{u,v\}$ in $P_{x'y'}$ would give a separating triangle $\{x,u,v\}$ in $G$.
   Hence $P_{x'y'}$ is an induced path in $G'$.
   Moreover, $P_{y'z}$ and $P_{z,x'}$ are subsets of the induced paths $P_{yz}$ and $P_{zx}$, respectively, and thus induced, too.
 
   By induction there is a Hamiltonian path $P'$ from $x'$ to $z$ in $G'$ and an orientation and coloring of the edges in $E(G') \setminus E(P)$, such that~\ref{enum:inner-two}--\ref{enum:acyclic} are satisfied. We extend $P'$ by the edge $xx'$, i.e., $P = \{xx'\} \cup P'$, and the coloring/orientation by orienting all incident edges at $x$ (except $xx'$) towards $x$ and coloring them red. See Figure~\ref{fig:case1} for an illustration.
 
   \begin{figure}[tb]
    \centering
    \includegraphics{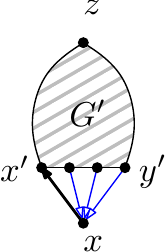}
    \caption{\textbf{Case~1:} $x=y$}
    \label{fig:case1}
   \end{figure}

   We need to argue that $P$ together with the coloring/orientation satisfies~\ref{enum:inner-two}--\ref{enum:acyclic}. Indeed~\ref{enum:inner-two} follows from~\ref{enum:inner-two},~\ref{enum:xy-blue} with respect to $G'$ and the coloring/orientation of the edges incident to $x$, for~\ref{enum:xy-blue} there is nothing to show,~\ref{enum:yz-red}--\ref{enum:y-z-no} follow from~\ref{enum:yz-red}--\ref{enum:y-z-no} with respect to $G'$, and~\ref{enum:x} follows again from the coloring/orientation of the edges incident to $x$. Finally, we need to show~\ref{enum:acyclic}, namely that there is no directed monochromatic cycle in $G$. By~\ref{enum:acyclic} with respect to $G'$ such a cycle would contain $x$, which has not incoming and outgoing edges of the same color.
  
  \item[Case~2: There is an edge $xu$ with $u \in \inner{P}_{yz}$.]{\ \\}  
   We choose $u \in \inner{P}_{yz}$ to be the vertex that is a neighbor of $x$ and is closest to $z$ on $\inner{P}_{yz}$. The three illustrations in Figure~\ref{fig:case2} display the three sub-cases that we sometimes have to treat differently along the construction: $zx\notin E$,  $zx\in E$ and $uz\notin E$, and $zx,uz\in E$.
 
   Define $G_1$ to be the inner triangulated subgraph of $G$ with outer cycle $P_{xy} \cup P_{yu} \cup \{ux\}$. Then $G_1$ is a Whitney graph with respect to $(x,y,u)$. We define $G_2$ to be the graph $G \setminus (G_1 \setminus \{u\})$. If both, $xz$ and $uz$, are edges in $G$, then $G_2$ is just a single edge, which we put into the Hamiltonian path. Otherwise $G_2$ is a Whitney graph with respect to $(u,y',z)$ (or rather $(u,u,z)$ in case $xz \in E$), where $y'$ is the neighbor of $x$ on $P_{zx}$ if $xz \notin E$. However, if $xz \notin E$ the embedding of $G_2$ needs to be flipped so that $u,y',z$ appear in counterclockwise order. Indeed $P_{y'u}$ is induced since it consists solely of neighbors of $x$, and similarly $P_{uz}$ is induced in case $xz \in E$. We apply induction to both, $G_1$ and $G_2$ (if $G_2$ is not just an edge), and concatenate the obtained Hamiltonian paths in $G_1$ and $G_2$ to a Hamiltonian path from $x$ to $z$ in $G$.
 
   It remains to color and orient the edges incident to $x$, but not contained in $G_1$. Additionally, we also define a color and orientation for the edge $ux$, disregarding its color and orientation given by induction on $G_1$. (Note that $ux$ is certainly not in the Hamiltonian path of $G_1$.) If $xz \in E$ we orient these edges, as well as the edge $ux$, towards $x$ and color them red, except for $xz$ which we color blue and orient to $z$. On the other hand if $xz \notin E$ we swap the colors red and blue in the coloring for $G_2$, but keep the orientation the same. Moreover, we color the edges incident to $x$ blue and orient them towards $x$, except for $xu$ which is oriented towards $u$. See Figure~\ref{fig:case2} for an illustration.
 
   \begin{figure}[tb]
    \centering
    \includegraphics{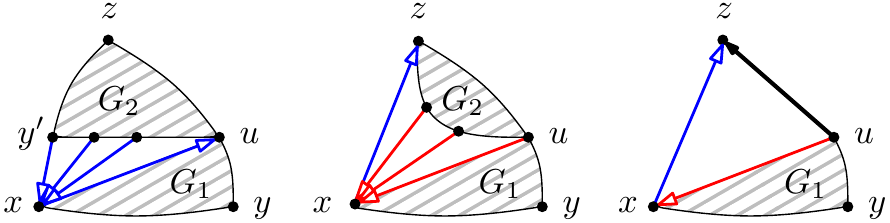}
    \caption{Considered subcases of \textbf{Case~2:} There is an edge $xu$ with $u \in \inner{P}_{yz}$.}
    \label{fig:case2}
   \end{figure}
 
   It is straightforward to check that~\ref{enum:inner-two}--\ref{enum:x} follow from~\ref{enum:inner-two}--\ref{enum:x} with respect to $G_1$ and $G_2$ and the coloring/orientation of the edges incident to $x$. Moreover, by~\ref{enum:acyclic} with respect to $G_1$ and $G_2$, every directed monochromatic cycle has to contain $x$. The only case in which $x$ has incoming and outgoing edges of the same color is when $xz \notin E$. There the only cycle would have to be blue and go through $u$ which has no blue outgoing edges by~\ref{enum:x} for $G_2$ and since red and blue were swapped in $G_2$. Thus there is no such directed monochromatic cycle in $G$, i.e.,~\ref{enum:acyclic} is satisfied.
   
  \item[Case~3: There is an edge $uv$ with $u \in \inner{P}_{xy}$ and $v \in P_{zx}$.]{\ \\}  
   We choose $u \in \inner{P}_{xy}$ to be the vertex that has a neighbor on $P_{zx}$ and is closest to $x$ on $P_{xy}$, and $v$ to be the neighbor of $u$ on $P_{zx}$ that is closest to $z$ on $P_{zx}$. Note that $u,v \neq x,y$, but possibly $v=z$. The three illustrations in Figure~\ref{fig:case2} display the three sub-cases that we sometimes have to treat differently along the construction: $xv\notin E$,  $xv\in E$ and $xu\notin E$, and $xv,xu\in E$. We define $G_2$ to be the inner triangulated subgraph of $G$ with outer cycle $P_{uy} \cup P_{yz} \cup P_{zv} \cup \{vu\}$. Then $G_2$ is a Whitney graph with respect to $(u,y,z)$. Indeed, $P_{zu}$ in $G_2$ is induced since $P_{zx}$ in $G$ is and by the choice of $v$ there is no edge between $u$ and $P_{zv} \setminus \{v\}$. Next consider $G_1 = G \setminus (G_2 \setminus \{u\})$ and the neighbor $y'$ of $v$ on $P_{vx}$. If both, $xu$ and $xv$, are edges in $G$, then $G_1$ is just the edge $xu$ and we put this edge into the Hamiltonian path. Otherwise $G_1$ is a Whitney graph with respect to $(x,y',u)$, since, by the choice of $u$, there is no edge from $v$ to a vertex on $P_{xy}$ between $x$ and $u$. However, the embedding of $G_1$ needs to be flipped so that $x,y',u$ appear in counterclockwise order. 
 
   We apply induction to both, $G_1$ (if $G_1$ is not just an edge) and $G_2$, and concatenate the obtained Hamiltonian paths in $G_1$ and $G_2$ to a Hamiltonian path from $x$ to $z$ in $G$. We orient the edges incident to $v$ towards $v$ and color them blue. Note that if $y'=x$ then the vertices on $P_{xy}$ that are contained in $G_1$ have no outgoing \emph{red} edge, since we flipped the embedding of $G_1$. Similarly the neighbors of $v$ in $G_1$ have no outgoing blue edge within $G_1$.
 
   \begin{figure}[tb]
    \centering
    \includegraphics{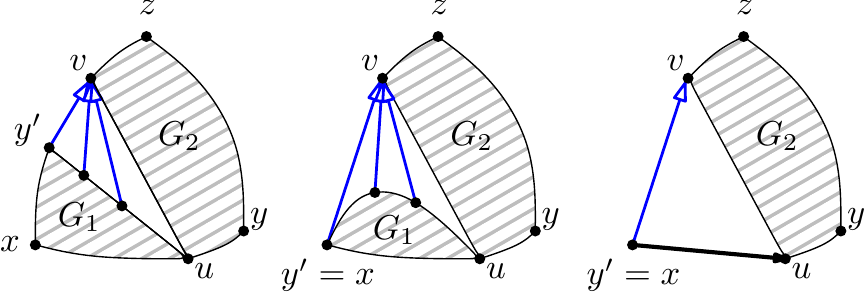}
    \caption{Considered subcases of \textbf{Case~3:} There is an edge $uv$ with $u \in \inner{P}_{xy}$ and $v \in P_{zx}$.}
    \label{fig:case3}
   \end{figure}
 
   It is again straightforward to check that~\ref{enum:inner-two}--\ref{enum:x} follow from~\ref{enum:inner-two}--\ref{enum:x} with respect to $G_1$ and $G_2$ and the coloring/orientation of the edges incident to $v$ and $u$. It remains to show that~\ref{enum:acyclic} is satisfied, i.e., there is no monochromatic directed cycle in $G$. By~\ref{enum:acyclic} with respect to $G_1$ and $G_2$, such a cycle would contain edges from $G_1$ to $v$ and pass through $u$, but $u$ has no outgoing edges towards $G_1$.
   
  \item[Case~4: $xz \in E$.]{\ \\}  
   Let $x'$ be the neighbor of $x$ in $P_{xy}$. Note that possibly $x' = y$, which is illustrated second in Figure~\ref{fig:case4}. Moreover $x'z$ is not an edge since Case~3 does not apply. Now $G' = G \setminus \{x\}$ is a Whitney graph with respect to $(x',y,z)$, since the vertices in $P_{zx'}$ are neighbors of $x$ and hence $P_{zx'}$ is induced. We apply induction to $G'$ and extend the obtained Hamiltonian path $P'$ by the edge $xx'$ and orient $xz$ from $x$ to $z$ and color it blue. We orient the remaining edges incident to $x$ towards $x$ and color them red. See Figure~\ref{fig:case4} for an illustration.
 
   \begin{figure}[tb]
    \centering
    \includegraphics{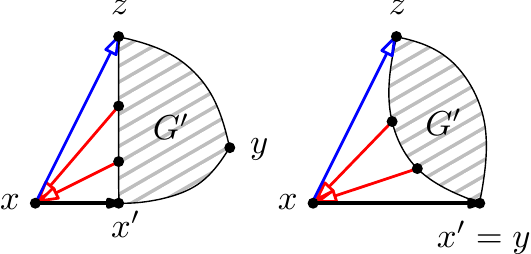}
    \caption{Considered subcases of \textbf{Case~4:} $xz \in E$}
    \label{fig:case4}
   \end{figure}
 
   Now~\ref{enum:inner-two} follows from~\ref{enum:inner-two},~\ref{enum:zx-blue} with respect to $G'$ and the coloring/orientation of the edges incident to $x$,~\ref{enum:xy-blue},~\ref{enum:yz-red} and~\ref{enum:y-z-no} follow from~\ref{enum:xy-blue},~\ref{enum:yz-red} and~\ref{enum:y-z-no} (and~\ref{enum:x} in case $x'=y$) with respect to $G'$, and~\ref{enum:zx-blue} and~\ref{enum:x} follow again from the orientation/coloring of the edges at $x$. Finally,~\ref{enum:acyclic} follows from~\ref{enum:acyclic} with respect to $G'$ and the fact that $x$ has no outgoing and incoming edges of the same color.
   
  \item[Case~5: $yz \in E$.]{\ \\}  
   Let $y'$ be the neighbor of $z$ in $P_{zx}$. Note that $y' \neq x$ since Case~4 does not apply. Now $G' = G \setminus \{z\}$ is a Whitney graph with respect to $(x,y',y)$. However, the embedding of $G'$ needs to be flipped so that $x,y',y$ appear in counterclockwise order. Indeed $P_{yy'}$ is an induced path since all its vertices are neighbors of $z$. We apply induction to $G'$ and extend the obtained Hamiltonian path $P'$ by the  edge $yz$. We orient the remaining edges incident to $z$ towards $z$ and color them blue. 
   See Figure~\ref{fig:case5} for an illustration.
 
   \begin{figure}[tb]
    \centering
    \includegraphics{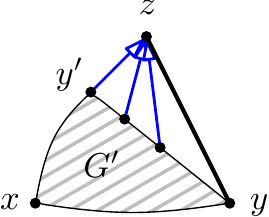}
    \caption{\textbf{Case~5:} $yz \in E$}
    \label{fig:case5}
   \end{figure}
 
   Now~\ref{enum:inner-two} follows from~\ref{enum:inner-two},~\ref{enum:yz-red} with respect to $G'$ and the coloring/orientation of the edges incident to $z$,~\ref{enum:xy-blue} and~\ref{enum:zx-blue}--\ref{enum:x} follow from~\ref{enum:xy-blue} and~\ref{enum:zx-blue}--\ref{enum:x} with respect to $G'$, and for~\ref{enum:yz-red} there is nothing to show. Finally,~\ref{enum:acyclic} follows from~\ref{enum:acyclic} with respect to $G'$ and the fact that $z$ has no outgoing and incoming edges of the same color.
   
  \item[Case~6: There is an edge $yu$ with $u \in \inner{P}_{zx}$.]{\ \\}  
   We choose $u$ to be any neighbor of $y$ on $P_{zx}$. Note that $u \neq z$ since Case~5 does not apply. We define $G_2$ to be the inner triangulated subgraph of $G$ with outer cycle $P_{yz} \cup P_{zu} \cup \{uy\}$. Then $G_2$ is a Whitney graph with respect to $(y,u,z)$. However, the embedding of $G_2$ needs to be flipped so that $y,u,z$ appear in counterclockwise order. Indeed, $P_{zy}$ in $G_2$ is induced since $P_{zu}$ in $G$ is so and by the choice of $u$ there is no edge between $y$ and $P_{zu} \setminus \{u\}$. Next consider $G_1 = G \setminus (G_2 \setminus \{y\})$ and the neighbor $y'$ of $u$ on $P_{ux}$. The three illustrations in Figure~\ref{fig:case6} display the three sub-cases that we sometimes have to treat differently along the construction: $xu\notin E$,  $xu\in E$ and $xy\notin E$, and $xu,xy\in E$. If both, $ux$ and $xy$, are edges in $G$, then $G_1$ is just the edge $xy$ and we put this edge into the Hamiltonian path. Otherwise $G_1$ is a Whitney graph with respect to $(x,y',y)$. However, the embedding of $G_1$ needs to be flipped so that $x,y',y$ appear in counterclockwise order.
 
   We apply induction to both, $G_1$ (if $G_1$ is not just an edge) and $G_2$, and concatenate the obtained Hamiltonian paths in $G_1$ and $G_2$ to a Hamiltonian path from $x$ to $z$ in $G$. We swap the colors red and blue in the coloring for $G_2$, but keep the orientation the same. We orient the edges incident to $u$ towards $u$ and color them blue. Note that if $y'=x$ then the vertices on $P_{xy}$ that are contained in $G_1$ have no outgoing \emph{red} edge, since we flipped the embedding of $G_1$. Similarly the neighbors of $u$ in $G_1$ have no outgoing blue edge within $G_1$.

   \begin{figure}[tb]
    \centering
    \includegraphics{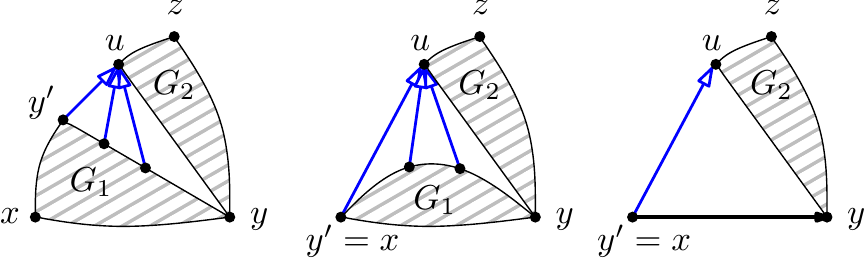}
    \caption{Considered subcases of \textbf{Case~6:} There is an edge $yu$ with $u \in \inner{P}_{zx}$.}
    \label{fig:case6}
   \end{figure}
 
   It is again straightforward to check that~\ref{enum:inner-two}--\ref{enum:x} follow from~\ref{enum:inner-two}--\ref{enum:x} with respect to $G_1$ and $G_2$ and the coloring/orientation of the edges incident to $u$ and $y$. By~\ref{enum:y-z-no} there is no outgoing edge at $y$ towards $G_1$. Thus no directed cycle contains $y$ and hence from~\ref{enum:acyclic} with respect to $G_1$ and $G_2$ follows~\ref{enum:acyclic} for $G$, i.e., there is no directed monochromatic cycle in $G$.
   
  \item[Case~7: None of Case~1 -- Case~6 applies.]{\ \\}
   Let $uv$ be the edge between a vertex $u\in\inner{P}_{xy}$ and a vertex $v\in\inner{P}_{yz}$ farthest away from $y$. If no such edge exists set $u=y$ and let $v$ be the neighbor of $y$ and on $P_{yz}$. Then $u \neq x$ and $v \neq z$, since otherwise an earlier case would apply.
 
   Let $w$ be the neighbor of $x$ on $P_{zx}$. Since earlier cases do not apply, we have $w \neq z$. Consider the subgraph $\tilde{G}$ of $G$ induced by all vertices that are not on $P_{xy}$ but have at least one neighbor in $P_{xy}$. Since none of Case~2, Case~3 and Case~6 applies, $v$ and $w$ are the only outer vertices of $G$ that are contained in $\tilde{G}$. Let $P$ be a shortest $vw$-path in $\tilde{G}$. (Clearly $\tilde{G}$ is connected since $G$ is inner triangulated.) We define $G'$ to be the inner triangulated subgraph of $G$ with outer cycle $P \cup P_{vz} \cup P_{zw}$. Then $G'$ is a Whitney graph with respect to $(v,w,z)$. However, the embedding of $G'$ needs to be flipped so that $v,w,z$ appear in counterclockwise order. Indeed, the path $P$ is induced since it is a shortest path.

   Moreover, if $u\neq y$ define $G''$ to be the inner triangulated subgraph of $G$ with outer cycle $P_{uy} \cup P_{yv} \cup vu$, which clearly is a Whitney graph with respect to $(u,y,v)$. If $u=y$, then $v$ is the neighbor of $y$ on $P_{yz}$ and we set $G''$ to be the edge $uv$.
 
   Furthermore we define $G_1,\ldots,G_k$ to be the blocks of $G \setminus (G'\cup G''\setminus\{u\})$, where the numbering is according to their appearance along $P_{xy}$. For every $i=1,\ldots,k$ the graph $G_i$ contains two vertices $x_i$ and $z_i$ that lie on $P_{xy}$ and have a neighbor on $P$. Let $x_i$ be the one that is closer to $x$ on $P_{xy}$. If $x_i$ and $z_i$ have a common neighbor on $P$, then $G_i$ is either just an edge or a Whitney graph with respect to $(x_i,x_i,z_i)$. Otherwise there is a unique vertex $y_i$ in $G_i$ that has two neighbors on $P$, and $G_i$ is a Whitney graph with respect to $(x_i,y_i,z_i)$. Whenever $G_i$ is not just an edge, we flip the embedding of $G_i$ so that $(x_i,y_i,z_i)$ appear in counterclockwise order. 

   We color and orient all the Whitney graphs $G', G'', G_1, \ldots, G_k$ by induction, where if $G''=uv$, we put $uv$ directed from $u$ to $v$ into the black path, and whenever $G_i = x_iz_i$, we put $x_iz_i$ directed from $x_i$ to $z_i$ into the black path. In $G'$ we swap the colors red and blue.

   The remaining edges emanating from $P$ outside $G'$ are oriented as follows. All such edges at a vertex on $P$ are blue and oriented towards it until the last edge from $P_{xy}$, which is blue but oriented towards $P_{xy}$. If there are more edges to the vertex they are red and oriented towards it.  See Figure~\ref{fig:case7} for an illustration.
 
   \begin{figure}[tb]
    \centering
    \includegraphics{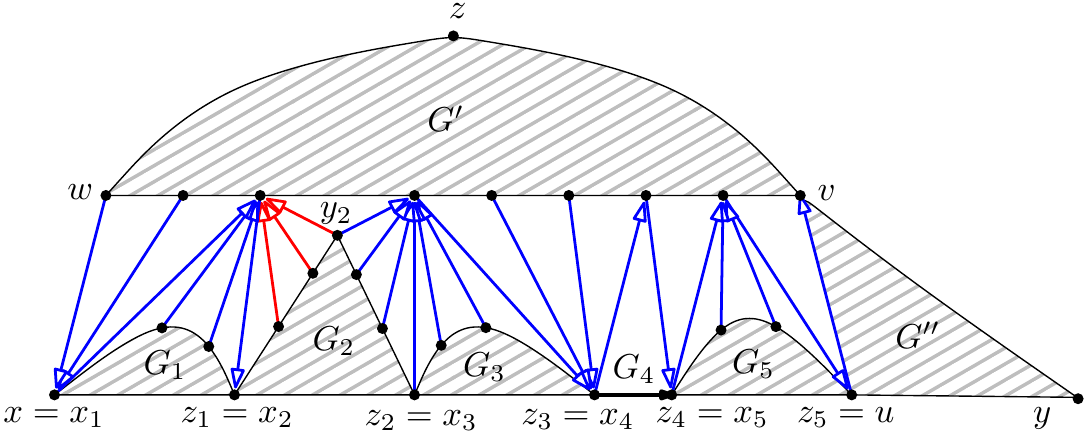}
    \caption{\textbf{Case~7:} None of Case~1 -- Case~6 applies.}
    \label{fig:case7}
   \end{figure}
 
   It is maybe a bit tedious but again straightforward to check that~\ref{enum:inner-two}--\ref{enum:x} follow from~\ref{enum:inner-two}--\ref{enum:x} with respect to $G', G'', G_1, \ldots, G_k$ after swapping red and blue in $G'$.
 
   In order to see~\ref{enum:acyclic}, first note that no red edges leave $G'$ and thus any monochromatic cycle would have to be blue.
   Moreover, vertices on $P$ have no blue outgoing edges towards $G'$, i.e., a blue cycle has to use $G_1, \ldots, G_k$ and the explicitly colored edges. The latter form a set of rooted trees, where the only vertices with ingoing blue edges in $G_1, \ldots, G_k$ are the vertices $x_1, z_1, \ldots, z_k$, since these have no outgoing blue edges within the respective $G_i$ any directed blue path continues to the right until eventually reaching $u$ and then possibly $v$, which has no blue outgoing edge. This gives~\ref{enum:acyclic} and concludes the proof.\qedhere
 \end{description} 
\end{proof}

\section{Triangle Assignments}\label{sec:triangle}

In this section we will establish the notion of triangle assignments and  under which conditions they are sufficient to apply Lemma~\ref{lem:Whitney} to recursive decompositions of planar triangulations. 

Let $G$ be a plane triangulation and let $X = X(G)$ and $Y = Y(G)$ denote the set of filled triangles in $G$ and inner faces in $G$, respectively.
In particular, $X \dot\cup Y$ is a partition of the triangles in $G$, we have $|X|=0$ if and only if $G$ is just a triangle, and we have $|X| = 1$ if and only if $G$ is $4$-connected.
The set $X$ is naturally endowed with a partial order $\prec$, where $\Delta \prec \Delta'$ whenever the interior of $\Delta$ is strictly contained in the interior of $\Delta'$.
Since any two filled triangles $\Delta_1,\Delta_2$ that both contain a third filled triangle $\Delta_3$ (i.e., $\Delta_3 \prec \Delta_1$ and $\Delta_3 \prec \Delta_2$) are necessarily contained in each other ($\Delta_1 \prec \Delta_2$ or $\Delta_2 \prec \Delta_1$), we have that the partially ordered set $(X,\prec)$ has the structure of a rooted tree $T_G$ whose root is the outer triangle $\Delta_{\rm out}$ and where $\Delta \prec \Delta'$ if and only if $\Delta,\Delta'$ lie on a root-to-leaf path in $T$ with $\Delta'$ being closer to the root $\Delta_{\rm out}$ than $\Delta$.
Tree $T_G$ is sometimes called the \emph{separation tree} of $G$, see~\cite{Sun-93} for an early appearance of the term.

For each triangle $\Delta \in X$, let $G_\Delta$ denote the subgraph of $G$ induced by $\Delta$ and all vertices inside $\Delta$.
We further define $\outer{G}$ to be the plane triangulated graph obtained from $G$ by removing for each $\Delta \in X(G) - \Delta_{\rm out}$ all inner vertices of $G_\Delta$.
Note that $\outer{G} = G$ if and only if $G$ is $4$-connected.
For the graphs $G_\Delta$ and $\outer{G_\Delta}$ with $\Delta \in X$ we have that
\begin{enumerate}[label = \textbf{(P\arabic*)}]
 \item $\outer{G_\Delta}$ is a $4$-connected triangulation for every $\Delta \in X(G)$,\label{enum:4-connected-parts}
 
 \item every separating triangle of $G$ is an inner face of $\outer{G_\Delta}$ for exactly one $\Delta \in X(G)$,\label{enum:separating-triangles-are-faces}
 
 \item the graphs $\{\outer{G_\Delta} - E(\Delta) \mid \Delta \in X(G)\}$ form an edge-partition of $G$.\label{enum:parts-form-partition}
\end{enumerate}
Properties~\ref{enum:4-connected-parts}--\ref{enum:parts-form-partition} will enable us to find $[2,k]^\star$-decompositions of $G$ for small $k$, based on $[2,2]^\star$-decompositions of $\outer{G_\Delta} - E(\Delta)$, $\Delta \in X(G)$, given by the following lemma.
Let $\Delta_{\rm out} = v_0v_1v_2$ be the outer triangle of $G$.
An outer vertex of $\outer{G} - E(\Delta_{\rm out})$ different from $v_0,v_1,v_2$ is called \emph{special vertex} of $G$, and is denoted by $u_i$, when it is adjacent to $v_{i-1}$ and $v_{i+1}$ (indices modulo~$3$). We also say that $u_i$ is the special vertex \emph{opposing} $v_i$.
Note that $u_0,u_1,u_2$ are pairwise distinct when $|V(\outer{G})| \geq 5$, and pairwise coincide when $|V(\outer{G})| = 4$, i.e., $\outer{G}\cong K_4$. An immediate consequence of Lemma~\ref{lem:Whitney} is the following. 

\begin{lem}\label{lem:inner-decomposition}
 Let $G$ be a plane triangulation with outer triangle $\Delta_{\rm out} = v_0v_1v_2$ and corresponding special vertices $u_0,u_1,u_2$.
 Then for any $\{x,y,z\}= \{0,1,2\}$ the edges of $\outer{G} - E(\Delta_{\rm out})$ can be partitioned into three forests $F_x,F_y,F_z$ such that
 \begin{itemize}
  \item $F_x$ is a Hamiltonian path of $(\outer{G}- E(\Delta_{\rm out}))\setminus\{v_y,v_z\}$ going from $v_x$ to $u_x$,
  
  \item $F_y$ is a spanning tree of $(\outer{G}- E(\Delta_{\rm out}))\setminus\{v_x,v_z\}$,
  
  \item $F_z$ is a spanning forest of $(\outer{G}- E(\Delta_{\rm out}))\setminus\{v_y\}$ consisting of two trees, one containing $v_x$ and one containing $v_z$, unless $\outer{G}\cong K_4$. In this case $F_z=v_zu_z$.
 \end{itemize}
\end{lem}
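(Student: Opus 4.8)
The plan is to read the whole statement off a single application of Lemma~\ref{lem:Whitney}. First I would set $H := \outer{G} - E(\Delta_{\rm out})$ and verify that $H$ is a Whitney graph with respect to $(v_x,v_y,v_z)$ (reflecting the embedding if necessary so that $v_x,v_y,v_z$ occur counterclockwise). Here $H$ is inner triangulated because every inner face of $\outer{G}$ survives the deletion; its outer boundary is the hexagon $v_x,u_z,v_y,u_x,v_z,u_y$ (so $u_x$ is the inner vertex of $\inner{P}_{v_yv_z}$); the three boundary paths $P_{v_xv_y},P_{v_yv_z},P_{v_zv_x}$ are induced, since any chord would be one of the deleted edges of $\Delta_{\rm out}$ or would create a separating triangle in the $4$-connected graph $\outer{G}$ (which is $4$-connected by~\ref{enum:4-connected-parts}); and $H$ has no filled triangle, as such a triangle would likewise be a separating triangle of $\outer{G}$. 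The degenerate case $\outer{G}\cong K_4$, where $u_x=u_y=u_z$, I would dispatch by hand, as the lemma already prescribes the output $F_z=v_zu_z$ there.

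Applying Lemma~\ref{lem:Whitney} to $H$ produces an orientation and a black/red/blue coloring satisfying~\ref{enum:hamil-path}--\ref{enum:acyclic}: a black Hamiltonian path of $H$ from $v_x$ to $v_z$, together with red and blue classes in which every vertex has out-degree at most one. Before distributing these edges I would record what is \emph{forced}. Since only $F_y$ is allowed to contain $v_y$, only $F_z$ to contain $v_z$, and $F_x$ avoids both, every edge of $H$ incident to $v_y$ must go into $F_y$ and every edge incident to $v_z$ into $F_z$; by~\ref{enum:y-z-no} these vertices emit no red or blue edge, so the edges in question are exactly their two (resp.\ one) black path-edges together with their incoming red/blue edges. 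A counting check confirms feasibility: $|E(H)|=3|V(\outer{G})|-9$, so each of $F_x,F_y,F_z$ must receive $|V(\outer{G})|-3$ edges.

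The heart of the argument is then to distribute the remaining edges. I would use the black Hamiltonian path~\ref{enum:hamil-path} as the backbone of $F_x$: deleting $v_y$ and $v_z$ and rerouting through the red/blue edges available at and near them (using the out-degree conditions~\ref{enum:inner-two}--\ref{enum:zx-blue} and the inner-triangulation of $H$) should yield a single path on $V(\outer{G})\setminus\{v_y,v_z\}$ whose terminal vertex is the unique red-sink $u_x$ on $\inner{P}_{v_yv_z}$. The edges removed from $F_x$ around $v_y$, together with $v_y$'s incoming edges, complete $F_y$ into a spanning tree of $V(\outer{G})\setminus\{v_x,v_z\}$, while the red class together with $v_z$'s edges and the extreme edge at $v_x$ form $F_z$. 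Acyclicity~\ref{enum:acyclic} is what I would invoke to certify that no class contains a cycle, that $F_y$ is connected, and that $F_z$ splits into exactly two trees, one rooted at $v_x$ and one at $v_z$.

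I expect the main obstacle to be precisely this simultaneous bookkeeping: arranging the redistribution so that $F_x$ comes out as one path ending at $u_x$ (rather than a branching tree), so that forcing all edges at $v_y$ into $F_y$ still leaves a single spanning tree, and so that $F_z$ separates $v_x$ from $v_z$ into two components without introducing a cycle. This hinges delicately on the asymmetric color bookkeeping of~\ref{enum:xy-blue}--\ref{enum:x}—blue attached to $v_x$ and to the two boundary paths meeting $v_x$, red to the opposite path $\inner{P}_{v_yv_z}$—and on acyclicity~\ref{enum:acyclic}. The examples where $v_y$ has degree two in $H$ are misleadingly clean (there a plain color split works), so care is needed for high-degree corners, and the $K_4$ case must be verified separately.
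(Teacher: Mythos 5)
Your reduction to Lemma~\ref{lem:Whitney} is the right instinct, but you apply it to the wrong graph, and this leaves a gap at exactly the point where the work has to happen. Taking $H = \outer{G} - E(\Delta_{\rm out})$ and invoking Lemma~\ref{lem:Whitney} with respect to $(v_x,v_y,v_z)$ gives a black Hamiltonian path of $H$ from $v_x$ to $v_z$; this path passes through $v_y$ as an interior vertex and enters $v_z$ through some neighbor $w$ of $v_z$ that need not be $u_x$. To produce $F_x$ you must delete $v_y$ and $v_z$, which breaks the path into pieces, and then ``reroute'' so that the pieces reassemble into a single Hamiltonian path of $H \setminus \{v_y,v_z\}$ ending at $u_x$. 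You give no argument that this is possible: the two black neighbors of $v_y$ need not be adjacent; any replacement edges would have to be taken from the red/blue classes, destroying the out-degree and root structure you need for $F_y$ and $F_z$; and nothing forces the truncated path to end at $u_x$ (in your setup $u_x$ is the unique vertex of $\inner{P}_{v_yv_z}$, so by~\ref{enum:yz-red} it has an outgoing \emph{red} edge --- it is not a red sink and is not distinguished as an endpoint of anything). You flag this as ``the main obstacle'' yourself, but it is not bookkeeping: it is the entire content of the lemma, and the proposal does not prove it. The claimed component counts for $F_y$ (one tree) and $F_z$ (exactly two trees, separating $v_x$ from $v_z$) are likewise asserted from acyclicity alone, without tracking where the roots of the red and blue forests lie.

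The paper avoids all of this by applying Lemma~\ref{lem:Whitney} to $\outer{G} - V(\Delta_{\rm out})$, i.e., with the three outer \emph{vertices} deleted, viewed as a Whitney graph with respect to $(u_y,u_z,u_x)$. The black Hamiltonian path then already avoids $v_x,v_y,v_z$, runs from $u_y$ to $u_x$, and becomes $F_x$ after appending the single edge $u_yv_x$; no surgery is needed. The edges from $v_x,v_y,v_z$ into the interior are then assigned by noting that the roots of the red forest are exactly the vertices of $P_{u_xu_y}\cup P_{u_yu_z}$, i.e., the neighbors of $v_z$ and $v_x$, so attaching them to $v_z$ resp.\ $v_x$ (red) yields the two trees of $F_z$, while the blue roots on $P_{u_zu_x}$ attach to $v_y$ (blue) to give the spanning tree $F_y$. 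If you want to salvage your write-up, delete the outer vertices before applying the key lemma, not just the outer edges; the rest of your forced-assignment observations then go through essentially verbatim.
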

\begin{proof}
 If $\outer{G}$ has only $4$ vertices, i.e., $\outer{G}\cong K_4$, then the decomposition $F_i=v_iu_i$ for all  $i\in\{x,y,z\}$ has the desired properties. So assume that $\outer{G}$ has at least $5$ vertices.
 Since $\outer{G}$ is $4$-connected, $\outer{G} - V(\Delta_{\rm out})$ is a Whitney graph with respect to $(u_0,u_1,u_2)$. By rotating and flipping we can assume without loss of generality that $(y,z,x) = (0,1,2)$. Then by Lemma~\ref{lem:Whitney} the edges of $\outer{G}- V(\Delta_{\rm out})$ can be oriented and colored black, red and blue with the following properties. The black edges form a Hamiltonian path from $u_0$ to $u_2$, which we extend by the edge $u_0v_2=u_yv_x$ to obtain $F_x$. Every inner vertex has exactly one outgoing red edge and one outgoing blue edge, every vertex on $\inner{P}_{u_2u_0}\cup\inner{P}_{u_0u_1}\cup\{u_0\}$ has an outgoing blue but no outgoing red edge, every vertex on $\inner{P}_{u_1u_2}$ has an outgoing red but no outgoing blue edge, and neither $u_1$ nor $u_2$ have outgoing blue or red edges. Since there are no directed monochromatic cycles. This implies that red and blue edges form a forest each, where the roots of the red trees correspond to the vertices on ${P}_{u_2u_0}\cup{P}_{u_0u_1}$ and the roots of the blue trees ${P}_{u_1u_2}$. Thus, coloring all edges except $u_yv_x$ from ${P}_{u_2u_0}\cup{P}_{u_0u_1}$ to $v_x$ or $v_z$ red gives $F_z$ and coloring all edges from ${P}_{u_1u_2}$ to $v_y$ blue gives $F_y$. See Figure~\ref{fig:lemma31} for an illustration. We have obtained the desired decomposition of the edges of $\outer{G} - E(\Delta_{\rm out})$.
 \begin{figure}[tb]
  \centering
  \includegraphics{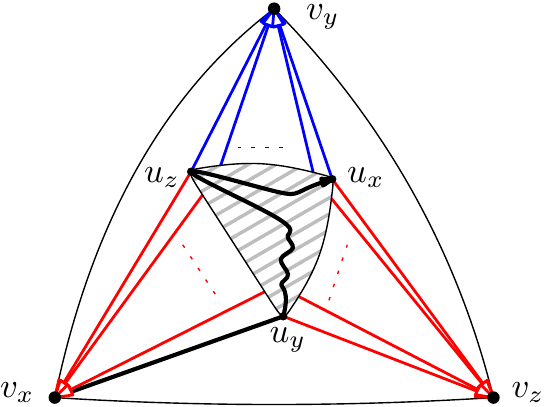}
  \caption{The construction in Lemma~\ref{lem:inner-decomposition}.}
  \label{fig:lemma31}
 \end{figure}
\end{proof}

We define for $k \in \{0,1,2\}$ a \emph{$k$-assignment} of a plane triangulation $G$ with outer triangle $\Delta_{\rm out}$ to be a map $\phi : X(G) \to V(G)$ which satisfies $\phi(\Delta) \in V(\Delta)$ for every $\Delta \in X$ and
\[
 |\phi^{-1}(v)| \leq
  \begin{cases}
   k+1 & \text{ if $v = \phi(\Delta_{\rm out})$ or there is some $\Delta \in X(G)$ such that}\\
       & \quad \text{ $v$ is the special vertex in $G_\Delta$ opposing $\phi(\Delta)$,}\\
   k & \text{ otherwise.}
  \end{cases}
\]

\begin{prop}\label{prop:decomposition-from-assignment}
 Let $G$ be a plane triangulation with outer triangle $\Delta_{\rm out} = w_0w_1w_2$ such that $w_0w_1$ or $w_0w_2$ is in no separating triangle.
 If $G$ admits a $k$-assignment $\phi$, $k \in \{0,1,2\}$, such that $\phi(\Delta_{\rm out})=w_0$, then $G$ can be decomposed into two trees and one spanning tree with maximum degree at most $k+2$, i.e., $G$ is $[2,k+2]^\star$-decomposable.
\end{prop}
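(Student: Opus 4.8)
The plan is to combine the per-piece decompositions of Lemma~\ref{lem:inner-decomposition} along the separation tree $T_G$ and then to add the three edges of $\Delta_{\rm out}$ to repair connectivity. First I would use the edge-partition property~\ref{enum:parts-form-partition} to write the edges of $G$ outside $\Delta_{\rm out}$ as the disjoint union of the pieces $\outer{G_\Delta}-E(\Delta)$, $\Delta\in X(G)$. To each piece I apply Lemma~\ref{lem:inner-decomposition} with role $x$ assigned to the corner $\phi(\Delta)$; at the root I use the hypothesis to label the two remaining corners so that the free outer edge (say $w_0w_2$) carries no separating triangle, and set $v_z=w_2$. This yields, for every $\Delta$, a Hamiltonian path $F_x^\Delta$, a tree $F_y^\Delta$, and a two-component forest $F_z^\Delta$. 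I then set $T_3=\bigcup_\Delta F_x^\Delta$, $T_1=\bigcup_\Delta F_y^\Delta$, $T_2=\bigcup_\Delta F_z^\Delta$, and finally distribute the three edges of $\Delta_{\rm out}$ among $T_1,T_2,T_3$.

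The key bookkeeping observation is that the pieces are glued only along the corners of separating triangles, and each such corner is, by~\ref{enum:separating-triangles-are-faces} and the definition of $\outer{G_\Delta}$, an \emph{inner} vertex of exactly one parent piece. Hence every vertex $v$ that is not an outer corner is inner to a unique piece $\outer{G_{\Delta(v)}}$, where $\Delta(v)$ is the minimal filled triangle containing $v$, and the forests of a child attach to those of its parent only at the single vertices playing roles $x$, $y$, and $x,z$, respectively. From this I expect to read off that $T_1$ and $T_3$ are trees (with $T_1$ covering all but the outer corners excluded at the root), and that $T_2$ is a forest with \emph{exactly two} components, one containing $w_0$ and one containing $w_2$ --- the two roots of $F_z^{\Delta_{\rm out}}$ --- since no single child tree can meet both. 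Adding $w_0w_2$ to $T_2$ then merges these into one tree, while adding $w_0w_1$ and $w_1w_2$ to $T_3$ attaches the two corners missing from $\bigcup_\Delta F_x^\Delta$ and turns $T_3$ into a spanning tree.

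The heart of the argument, and where the notion of a $k$-assignment is used, is the degree bound on the spanning tree $T_3$. For an inner vertex $v$ the only piece in which $v$ is interior is $\outer{G_{\Delta(v)}}$, where its path-degree is $2$ unless $v$ is the special vertex opposing $\phi(\Delta(v))$, in which case it is $1$; and in every piece $\Delta$ with $v\in V(\Delta)$ the vertex $v$ is a path-endpoint of $F_x^\Delta$ exactly when $\phi(\Delta)=v$ and lies off the path otherwise. Summing gives the clean identity
\[
 \deg_{T_3}(v)=\deg_{F_x^{\Delta(v)}}(v)+|\phi^{-1}(v)|.
\]
Because a special vertex of a piece is an inner vertex of that piece, the phrase ``$v$ is the special vertex opposing $\phi(\Delta)$ for some $\Delta$'' can hold only for $\Delta=\Delta(v)$; this matches the two cases in the definition of a $k$-assignment precisely. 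Thus the first term equals $1$ only when $v$ is such a special vertex, in which case $|\phi^{-1}(v)|\le k+1$, and equals $2$ only when it is not, in which case (as $v$ is not an outer corner either) $|\phi^{-1}(v)|\le k$; in both cases $\deg_{T_3}(v)\le k+2$. For the outer corners one checks directly that $\deg_{T_3}(w_0)\le|\phi^{-1}(w_0)|+1\le k+2$, using $\phi(\Delta_{\rm out})=w_0$ and that at most one edge of $\Delta_{\rm out}$ is placed at $w_0$, and likewise for $w_1,w_2$.

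The main obstacle I anticipate is the global connectivity and acyclicity bookkeeping rather than the degree count: one must verify that the per-piece trees and forests glue into the claimed global objects, that $\bigcup_\Delta F_z^\Delta$ really has exactly the two advertised components, and that the single repair edge $w_0w_2$ neither leaves a component isolated nor creates a cycle --- which is precisely where the hypothesis that $w_0w_1$ or $w_0w_2$ lies in no separating triangle enters, by keeping the two endpoints of the repair edge in distinct components. A secondary delicate point is that at most one edge of $\Delta_{\rm out}$ may be incident to $w_0$ in $T_3$, so that the allowance $|\phi^{-1}(w_0)|\le k+1$ is not overspent; routing the remaining corner through $w_1w_2$ rather than through a second edge at $w_0$ is exactly what the choice $\phi(\Delta_{\rm out})=w_0$ together with the free outer edge makes possible.
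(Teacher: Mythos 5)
Your overall strategy is essentially the paper's: apply Lemma~\ref{lem:inner-decomposition} to each piece $\outer{G_\Delta}$ with the $x$-role at $\phi(\Delta)$, glue along the separation tree, place $w_0w_1,w_1w_2$ with the path-pieces and $w_0w_2$ with the two-component forest, and read off the degree of the bounded-degree tree as $\deg_{F_x^{\Delta(v)}}(v)+|\phi^{-1}(v)|$. The paper merely organizes the gluing as an induction that peels off an inclusion-minimal filled triangle, and its inductive invariant is exactly your degree identity; your observation that a vertex can be the special vertex opposing $\phi(\Delta)$ only for $\Delta=\Delta(v)$ is also correct and is used the same way there. The gap sits precisely at the point you flag as the anticipated obstacle: you never specify how the roles $y$ and $z$ are distributed among the two corners of a \emph{non-root} piece other than $\phi(\Delta)$, and this choice is not free. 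Each child forest $F_y^\Delta$ meets the union of its ancestors' forests in the single vertex $v_y(\Delta)$ only, so $T_1$ remains a tree only if $v_y(\Delta)\notin\{w_0,w_2\}$ for every $\Delta\in X(G)-\Delta_{\rm out}$; symmetrically, $T_2$ keeps exactly its two advertised components only if the attachment points $v_x(\Delta),v_z(\Delta)$ avoid $w_1$. Securing the first condition is exactly what the hypothesis that $w_0w_2$ lies in no separating triangle buys: no separating triangle contains both $w_0$ and $w_2$, so among the two non-$\phi(\Delta)$ corners one can always be designated $v_y(\Delta)$ outside $\{w_0,w_2\}$. The paper makes this choice explicitly (it arranges $v_1\notin\{w_0,w_2\}$ and $\{v_0,v_2\}\not\subseteq\{w_0,w_1,w_2\}$ before invoking Lemma~\ref{lem:inner-decomposition}); without it the claim that $T_1$ is a tree simply does not follow.

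By contrast, the place where you do invoke the hypothesis --- keeping the endpoints of the repair edge $w_0w_2$ in distinct components of $T_2$ --- does not actually need it. Each of the two components of a child's $F_z^\Delta$ attaches to the ancestors' union at a single vertex, so no child can ever merge the $w_0$- and $w_2$-components of $T_2$; the only danger is that a child component attaches at a vertex the relevant global tree does not contain ($w_0$ or $w_2$ for $T_1$, $w_1$ for $T_2$), spawning an extra component. So the proposal is recoverable, but you must add the role-assignment rule for the non-root pieces and verify, via the separating-triangle hypothesis, that it is always available; as written, the structural claims about $T_1$ and $T_2$ are unsupported even though the degree count is right.
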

\begin{proof}
 Without loss of generality assume that $w_0w_2$ is in no separating triangle.

 We will prove that $G$ decomposes into three trees $T_0, T_1, T_2$, where $T_0$ is spanning and has maximum degree $k+2$, $T_1$ spans $G\setminus\{w_0, w_2\}$, and $T_2$ spans $G\setminus\{w_1\}$.
 Even stronger, for every vertex $v$ we shall have $\deg_{T_0}(v) = 1 + |\phi^{-1}(v)|$ if $v = w_0$ or $v$ is a special vertex opposing $\phi(\Delta^*)$ for some $\Delta^* \in X(G)$, and $\deg_{T_0}(v) = 2 + |\phi^{-1}(v)|$ otherwise.
 This will be done by induction along the separation tree of $G$.
 
 If $G$ is just the triangle $\Delta_{\rm out} = w_0w_1w_2$ we set $T_0=\{w_0w_1,w_1w_2\}$, $T_1=\emptyset$, and $T_2=w_2w_0$, and we are done.
 
 Let now $\Delta= v_0v_1v_2 \in X(G)$ be an inclusion-minimal filled triangle of $G$.
 Hence $G_\Delta$ is $4$-connected and we have $\outer{G_\Delta} = G_\Delta$.
 We apply induction to the graph $G'$ obtained from $G$ by removing all vertices inside $\Delta$, together with the $k$-assignment $\phi'$ obtained from $\phi$ by restricting to $X(G') = X(G) \setminus \{\Delta\}$, and obtain three trees $T'_0,T'_1,T'_2$ with the desired properties. We have to include the edges of $G_\Delta - E(\Delta)$ into our decomposition. 
  
 
 Note that at most two vertices of $\Delta$ also appear in $\Delta_{\rm out}$ but by the choice of $w_0w_2$ these cannot be $w_0$ and $w_2$ simultaneously.
 Without loss of generality let $\phi(\Delta) = v_0$ and $v_2$ be such that $\{v_0,v_2\}\not\subseteq\{w_0,w_1,w_2\}$.
 Since $w_0w_2$ is in no separating triangle, we may also assume that $v_1 \notin \{w_0,w_2\}$.
 Let $F_0, F_1, F_2$ be a partition of the edges in $G_\Delta - E(\Delta)$ as given by Lemma~\ref{lem:inner-decomposition} for $x=0$, $y=1$ and $z=2$.
 (Recall that $\outer{G_\Delta} = G_\Delta$.)
 We include $F_0$ into $T'_0$, $F_1$ into $T'_1$, and $F_2$ into $T'_2$.
 Clearly, by~\ref{enum:parts-form-partition} we obtain a set of trees $T_0,T_1,T_2$ with the desired properties, except that it remains to bound the degrees of vertices in $T_0$. 

 First consider any inner vertex $v$ of $G_\Delta$.
 If $v = u_0$ is the vertex in $G_\Delta$ opposing $v_0 = \phi(\Delta)$, then $v$ has one incident edge in $F_0$ and no incident edge in $T'_0$, and thus $\deg_{T_0}(v) = 1 + |\phi^{-1}(v)|$ with $|\phi^{-1}(v)| = 0$.
 If $v$ is an inner vertex of $G_\Delta$ different from $u_0$, then $v$ has two incident edges in $F_0$ and no incident edge in $T'_0$, and thus $\deg_{T_0}(v) = 2 + |\phi^{-1}(v)|$ with $|\phi^{-1}(v)| = 0$.
 
 Now consider any vertex $v$ of $G'$.
 Note that $v$ is a special vertex opposing $\phi(\Delta^*)$ for some $\Delta^* \in X(G)$ if and only if that is also the case for $\phi'$ and $G'$.
 If $v = v_0$, then $|\phi^{-1}(v)| = |\phi'^{-1}(v)| + 1$ and $\deg_{T_0}(v) = \deg_{T'_0}(v) + 1$, as $v$ has exactly one incident edge in $F_0$, which implies the desired degree of $v$ in $T_0$ for this case.
 Finally, if $v$ is any vertex of $G'$ different from $v_0$, then $|\phi^{-1}(v)| = |\phi'^{-1}(v)|$ and $\deg_{T_0}(v) = \deg_{T'_0}(v)$, as $v$ has no incident edge in $F_0$, which implies the desired degree of $v$ in $T_0$ also for this case.
\end{proof}

\section{Proofs of Decomposition Results}\label{sec:proofs}

In this section, we show that all the classes of concern admit triangle assignments qualifying for Proposition~\ref{prop:decomposition-from-assignment} with respect to the claimed parameters. This will suffice to prove the Theorems of this paper.

\begin{figure}[tb]
 \centering
 \includegraphics{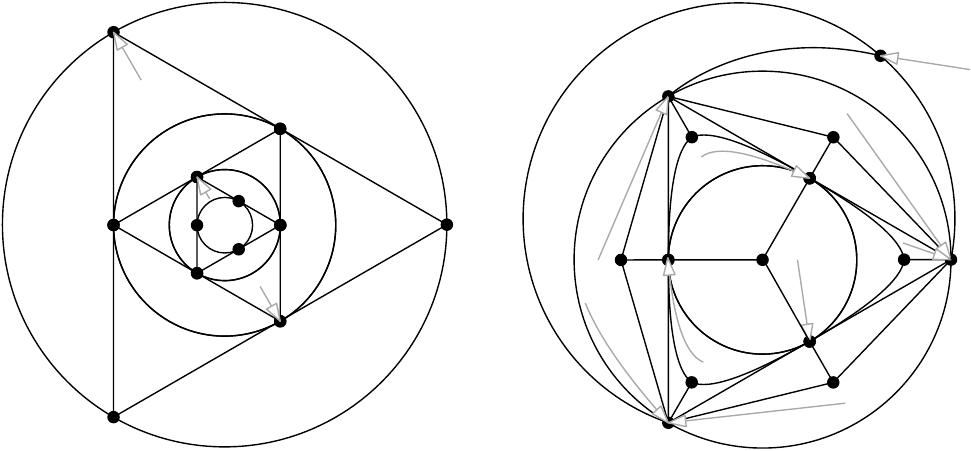}
 \caption{A non-$4$-connected planar triangulation with $0$-assignment qualifying for Proposition~\ref{prop:decomposition-from-assignment} and a non-Hamiltonian planar triangulation with $1$-assignment qualifying for Proposition~\ref{prop:decomposition-from-assignment}.}
 \label{fig:christmas-tree}
\end{figure}

Now Proposition~\ref{prop:decomposition-from-assignment} immediately implies Theorem~\ref{thm:2t1p}, namely that every $4$-connected triangulation is $[2,2]^\star$-decomposable.

\begin{proof}[Proof of Theorem~\ref{thm:2t1p}]
 Clearly, if $G$ is a $4$-connected plane triangulation, then $X(G)$ consists only of the outer face $\Delta_{\rm out}=xyz$ and $\phi(\Delta_{\rm out}) = x$ is a $0$-assignment of $G$.
 Since $G$ contains no separating triangles, applying Proposition~\ref{prop:decomposition-from-assignment} yields the result.
\end{proof}

We remark that some plane triangulations (such as the one in the left of Figure~\ref{fig:christmas-tree}) are not $4$-connected and still admit a $0$-assignment qualifying for Proposition~\ref{prop:decomposition-from-assignment}.

Next, we shall turn our attention to general plane triangulations.
Recall that $Y(G)$ denotes the set of all inner faces of a plane triangulation $G$.

\begin{lem}\label{lem:inner-face-assignment}
 Let $G$ be a plane triangulation on at least four vertices and $u$ be a special vertex of $G$.
 Then there is a map $\psi : Y(G) \to V(G)$ with $\psi(\Delta) \in V(\Delta)$ for every $\Delta \in Y(G)$ and
 \[
  |\psi^{-1}(v)| = 
   \begin{cases}
    0 & \text{ if $v$ is an outer vertex,}\\
    2 & \text{ if $v$ is an inner vertex, $v \neq u$,}\\
    3 & \text{ if $v = u$.}
   \end{cases}
 \]
\end{lem}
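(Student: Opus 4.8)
The plan is to encode the desired map $\psi$ as a degree-constrained subgraph of a bipartite incidence graph and then verify a Hall-type condition by Euler's formula. Write $n=|V(G)|$ and let $V_i$ be the set of inner vertices, so $|V_i|=n-3$. A triangulation has $2n-4$ faces, hence $|Y(G)|=2n-5$, and the prescribed targets $c_v$ (namely $c_v=0$ for outer $v$, $c_v=2$ for inner $v\neq u$, and $c_u=3$) sum to $2(n-4)+3=2n-5=|Y(G)|$, so the count is consistent. Note also that no inner face has all three vertices on $\Delta_{\rm out}$ (that would force $G=\Delta_{\rm out}$), so every inner face has at least one incident inner vertex and can in principle be assigned.

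First I would reformulate. Consider the bipartite graph $B$ with parts $Y(G)$ and $V_i$, joining a face to each of its incident inner vertices. Finding $\psi$ amounts to choosing a subgraph of $B$ in which every face has degree exactly $1$ and every inner vertex $v$ has degree exactly $c_v$. Since the two required degree-sums agree, such a subgraph (equivalently, a feasible integral flow in the associated transportation network) exists if and only if the Hall/Hoffman condition holds: for every $W\subseteq V_i$, the set $A_W$ of inner faces all of whose inner vertices lie in $W$ satisfies $|A_W|\le \sum_{v\in W}c_v=2|W|+[u\in W]$.

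Then I would prove this inequality by applying Euler's formula to the plane graph $H=H_W$ whose edges are those lying on some face of $A_W$. The faces of $A_W$ are triangular faces of $H$, while every remaining face of $H$ (there are $t\ge 1$ of them, including the outer face) has facial length at least $3$; counting edge–face incidences gives $2|E(H)|\ge 3(|A_W|+t)$, and combining with Euler's formula $|V(H)|-|E(H)|+(|A_W|+t)=1+c$, where $c$ is the number of components of $H$, yields $|A_W|\le 2|V(H)|-2-2c-t$. Since the vertices of $H$ are exactly the vertices of $A_W$-faces, $|V(H)|\le |W|+|O_W|$, where $O_W\subseteq\{v_0,v_1,v_2\}$ is the set of outer vertices met by $A_W$ and $|O_W|\le 3$; plugging in $c,t\ge1$ gives $|A_W|\le 2|W|+1$. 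This already settles the inequality whenever $u\in W$, and whenever $|O_W|\le 2$ it even gives the stronger $|A_W|\le 2|W|-1$.

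The remaining, and most delicate, case is $u\notin W$ together with $|A_W|=2|W|+1$. Here every inequality above must be tight: $|O_W|=3$, $c=t=1$, and all faces of $H$, including the outer one, are triangles. Thus $H$ is a triangulated disk whose boundary is a $3$-cycle; as $v_0,v_1,v_2$ lie on the outer face of $G\supseteq H$ they must lie on this boundary, so the boundary is exactly $\Delta_{\rm out}$ and $H$ triangulates its entire interior, forcing $H=G$ and hence $W=V_i\ni u$, a contradiction. Consequently the Hall condition holds for all $W$, the required subgraph of $B$ exists, and recording the chosen inner neighbor of each face defines $\psi$. I expect the main obstacle to be precisely this tightness analysis, namely ruling out degenerate $H$ (cut vertices, bridges, a non-triangular outer face) so that tightness genuinely forces $H=G$; the edge–face incidence bound $2|E(H)|\ge 3(|A_W|+t)$ must be checked with care in the non-$2$-connected case. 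It is worth remarking that this argument in fact works for an \emph{arbitrary} inner vertex $u$; the hypothesis that $u$ is special is only needed afterwards, to make the resulting assignment compatible with the opposing-vertex structure exploited in Proposition~\ref{prop:decomposition-from-assignment}.
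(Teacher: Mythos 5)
Your proof is correct, but it follows a genuinely different route from the paper's. The paper disposes of this lemma in three lines by a geometric trick: letting $v_1,v_2$ be the two outer neighbours of $u$ and $\Delta^\star=v_1v_2u$ the inner face they span, it takes a straight-line embedding in which $v_1v_2$ is the only horizontal edge, assigns every inner face other than $\Delta^\star$ to its vertex of middle $y$-coordinate, and assigns $\Delta^\star$ to $u$; each inner vertex is then the middle vertex of exactly the two faces at which its neighbourhood switches between the upper and the lower arc, while the outer vertices are never middle vertices. You instead recast the lemma as the existence of a degree-constrained subgraph of the face/inner-vertex incidence graph, reduce to the Gale--Hoffman (max-flow) feasibility condition $|A_W|\le 2|W|+[u\in W]$, and verify it with Euler's formula plus a tightness analysis. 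The steps check out: since every edge of $H_W$ lies in a triangle of $A_W$, the graph $H_W$ is bridgeless and every facial walk has length at least $3$, so the incidence count $2|E(H_W)|\ge 3(|A_W|+t)$ is safe even when $H_W$ is disconnected or has cut vertices; and in the extremal case $c=t=1$ with a triangular outer face meeting all of $v_0,v_1,v_2$, the bounded faces of $H_W$ tile the interior of $\Delta_{\rm out}$ with faces of $G$, which forces $H_W=G$ and $u\in W$ as you claim. The trade-off: the paper's argument is much shorter and fully explicit, while yours avoids straight-line embeddings altogether and, as you note, yields the slightly stronger statement in which the triply-hit vertex may be an arbitrary inner vertex --- the paper's construction genuinely needs $u$ to be special, because the exceptional face $\Delta^\star$ must be the one whose ``middle'' vertex is an outer vertex (indeed ambiguous between $v_1$ and $v_2$).
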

\begin{proof}
 Let $v_1,v_2$ be two outer vertices adjacent to the special vertex $u$ and $\Delta^\star \in Y(G)$ be the inner face formed by $v_1$, $v_2$ and $u$.
 Consider a plane straight-line embedding of $G$ in which the only horizontal edge is $v_1v_2$.
 For every inner face $\Delta \in Y(G) - \Delta^\star$, let $\psi(\Delta)$ be the vertex in $V(\Delta)$ with the middle $y$-coordinate.
 Moreover, let $\psi(\Delta^\star) = u$.
 It is easily seen that $\psi$ has the desired properties.
\end{proof}

\begin{lem}\label{lem:2-assignment}
 Every plane triangulation admits a $2$-assignment.
\end{lem}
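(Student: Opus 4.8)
The plan is to build the $2$-assignment $\phi$ by a top-down sweep of the separation tree $T_G$, applying Lemma~\ref{lem:inner-face-assignment} locally on each $4$-connected piece $\outer{G_\Delta}$ and exploiting the fact that the interiors of these pieces are vertex-disjoint. First I would fix the root value $\phi(\Delta_{\rm out}) = w_0$ for an arbitrary corner $w_0$ of the outer triangle (the definition of a $k$-assignment places no constraint on $\phi(\Delta_{\rm out})$, so any choice is admissible). Then, processing the filled triangles $\Delta \in X(G)$ from the root downward, once $\phi(\Delta)$ is already determined I would let $u(\Delta)$ be the special vertex of $\outer{G_\Delta}$ opposing $\phi(\Delta)$ and apply Lemma~\ref{lem:inner-face-assignment} to the $4$-connected triangulation $\outer{G_\Delta}$ with this choice of special vertex, obtaining a map $\psi_\Delta : Y(\outer{G_\Delta}) \to V(\outer{G_\Delta})$. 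For every child $\Delta'$ of $\Delta$ in $T_G$ — which by \ref{enum:separating-triangles-are-faces} is precisely an inner face of $\outer{G_\Delta}$, hence an element of $Y(\outer{G_\Delta})$ — I would set $\phi(\Delta') := \psi_\Delta(\Delta')$. Since $\psi_\Delta(\Delta') \in V(\Delta')$, this gives $\phi(\Delta') \in V(\Delta')$ as required, and the recursion is well-founded because $\psi_\Delta$ depends only on $\phi(\Delta)$, which is fixed before its children are treated.

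The heart of the argument is bounding the fibers $\phi^{-1}(v)$. The key structural observation I would establish is that every inner vertex $v$ of $G$ is an interior (non-outer) vertex of exactly one piece $\outer{G_\Delta}$, namely of $\outer{G_{\Delta(v)}}$, where $\Delta(v)$ is the inclusion-minimal filled triangle whose interior contains $v$. This follows from the chain structure of $(X,\prec)$ together with the description of $\outer{G_\Delta}$ as $G_\Delta$ with the interiors of all its children removed: $v$ survives as an interior vertex of $\outer{G_{\Delta(v)}}$ by minimality of $\Delta(v)$, while for every ancestor $\Delta' \succ \Delta(v)$ the vertex $v$ lies strictly inside the child of $\Delta'$ on the chain down to $\Delta(v)$ and is therefore removed. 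Since each $\psi_\Delta$ assigns faces only to interior vertices of $\outer{G_\Delta}$, a filled triangle can be mapped to $v$ only when it is a child of $\Delta(v)$, whence $|\phi^{-1}(v)| \le |\psi_{\Delta(v)}^{-1}(v)|$. The three corners $w_0,w_1,w_2$ of $\Delta_{\rm out}$ are never interior to any piece, so they receive nothing from the maps $\psi_\Delta$; this yields $|\phi^{-1}(w_0)| = 1$ and $|\phi^{-1}(w_1)| = |\phi^{-1}(w_2)| = 0$.

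It then remains to read off the numerical bounds from Lemma~\ref{lem:inner-face-assignment}. For an interior vertex $v \neq u(\Delta(v))$ the lemma gives $|\psi_{\Delta(v)}^{-1}(v)| = 2$, so $|\phi^{-1}(v)| \le 2$; and for $v = u(\Delta(v))$ it gives $|\psi_{\Delta(v)}^{-1}(v)| = 3$, so $|\phi^{-1}(v)| \le 3$. In the latter case $v$ is, by construction of $u(\Delta(v))$, exactly the special vertex of $G_{\Delta(v)}$ opposing $\phi(\Delta(v))$, which is precisely one of the two situations permitting a fiber of size $k+1 = 3$ in the definition of a $2$-assignment; the corner $w_0 = \phi(\Delta_{\rm out})$ accounts for the other. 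Hence $\phi$ meets all the required inequalities and is a $2$-assignment. I expect the main obstacle to be the careful verification of the ``exactly one piece'' claim for interior vertices — in particular ruling out that a vertex strictly inside $\Delta(v)$ could reappear as a corner of a separating triangle of some ancestor piece — since this disjointness is exactly what prevents the local counts from Lemma~\ref{lem:inner-face-assignment} from accumulating across different pieces.
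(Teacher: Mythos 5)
Your proof is correct and follows essentially the same route as the paper: apply Lemma~\ref{lem:inner-face-assignment} to each $4$-connected piece $\outer{G_\Delta}$, and glue the resulting maps using the fact that every inner vertex of $G$ is an inner vertex of exactly one such piece. Your top-down choice of the special vertex $u(\Delta)$ as the one opposing $\phi(\Delta)$ makes explicit a detail the paper leaves implicit (it only says ``a fixed special vertex''), and that choice is indeed what makes the fiber of size~$3$ admissible under the definition of a $2$-assignment.
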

\begin{proof}
 Let $G$ be any plane triangulation with outer triangle $\Delta_{\rm out}$.
 By~\ref{enum:separating-triangles-are-faces} we have that $\{\Delta_{\rm out}\} \cup \bigcup_{\Delta \in X(G)} Y(\outer{G_\Delta}) \supset X(G)$.
 Similarly, every inner vertex of $G$ is an inner vertex of $\outer{G_\Delta}$ for exactly one $\Delta \in X(G)$.
 By Lemma~\ref{lem:inner-face-assignment} we have maps $\psi_\Delta : Y(\outer{G_\Delta}) \to V(\outer{G_\Delta})$ mapping the inner faces of $\outer{G_\Delta}$ to inner vertices of $\outer{G_\Delta}$ such that every inner vertex is hit twice, except for a fixed special vertex of $\outer{G_\Delta}$, which is hit three times.
 From this collection of maps $\{\psi_\Delta \mid \Delta \in X(G)\}$ we obtain a desired $2$-assignment $\phi : X(G) \to V(G)$ by setting $\phi(\Delta_{\rm out}) = v$ for some outer vertex $v$ of $G$ and $\phi(\Delta') = \psi_\Delta(\Delta')$ whenever $\Delta' \in Y(G_\Delta)$.
\end{proof}

We can now give an alternative proof of Gon{\c{c}}alves's result~\cite{Gon-09} that every planar graph is $(2,4)^\star$-decomposable and indeed a slight strengthening thereof, since in~\cite{Gon-09} the forests of the decomposition are not necessarily connected.
That is, we prove Theorem~\ref{thm:planar} stating that every planar triangulation decomposes into two trees and one spanning tree of maximum degree~$4$.

\begin{proof}[Proof of Theorem~\ref{thm:planar}]
 Lemma~\ref{lem:2-assignment} gives a $2$-assignment $\phi$ for a planar triangulation for any prescribed outer triangle $\Delta_{\rm out}$, where moreover the choice of $\phi(\Delta_{\rm out})$  is arbitrary. Since clearly any planar triangulation contains an edge $e$, that is not contained in a separating triangle we can choose an outer triangle containing $e$. Use Lemma~\ref{lem:2-assignment} to get a $2$-assignment such that $\phi(\Delta_{\rm out})$  is a vertex of $e$ and apply Proposition~\ref{prop:decomposition-from-assignment} to get the desired decomposition.
\end{proof}


Next, we shall turn our attention to Hamiltonian planar triangulations.


\begin{lem}\label{lem:1-assignment}
 Let $G$ be a plane triangulation with outer triangle $\Delta_{\rm out} = v_0v_1v_2$.
 \begin{enumerate}[label = (\roman*)]
  \item If $G$ admits a Hamiltonian $v_0$-$v_2$-path, then $G$ admits a $1$-assignment $\phi$ with $\phi(\Delta_{\rm out}) = v_1$ and 
  \[
   |\phi^{-1}(v)| \leq 
    \begin{cases}
     1 & \text{ if $v = v_1,v_2$,}\\
     0 & \text{ if $v = v_0$.}
    \end{cases}
  \]
  \label{item:assign-v_1-and-split}
  
  \item If $G$ admits a Hamiltonian $v_0$-$v_2$-path, then $G$ admits a $1$-assignment $\phi$ with $\phi(\Delta_{\rm out}) = v_1$ and 
  \[
   |\phi^{-1}(v)| \leq 
    \begin{cases}
     2 & \text{ if $v = v_1$,}\\
     0 & \text{ if $v = v_0,v_2$.}
    \end{cases}
  \]
  \label{item:assign-v_1-and-both}
  
  \item If $G - v_1$ admits a Hamiltonian $v_0$-$v_2$-path, then $G$ admits a $1$-assignment $\phi$ with $\phi(\Delta_{\rm out}) = v_2$ and 
  \[
   |\phi^{-1}(v)| \leq 
    \begin{cases}
     1 & \text{ if $v = v_2$,}\\
     0 & \text{ if $v = v_0,v_1$.}
    \end{cases}
  \]
  \label{item:assign-v_2}
 \end{enumerate}
\end{lem}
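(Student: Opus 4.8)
The plan is to prove the three statements \ref{item:assign-v_1-and-split}, \ref{item:assign-v_1-and-both}, and \ref{item:assign-v_2} simultaneously by induction on $|V(G)|$, the point being that the natural recursion splits $G$ along a separating triangle and the inner piece will inherit a Hamiltonian path of exactly one of the three traversal types, so one wants to feed it to the matching item of the induction hypothesis. The base case is $G$ being $4$-connected: then $X(G)=\{\Delta_{\rm out}\}$, and putting $\phi(\Delta_{\rm out})=v_1$ for \ref{item:assign-v_1-and-split} and \ref{item:assign-v_1-and-both}, respectively $\phi(\Delta_{\rm out})=v_2$ for \ref{item:assign-v_2}, satisfies all the stated preimage bounds trivially, since every other preimage is empty.

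For the inductive step I would fix a separating triangle $\Delta=abc$ of $G$ and let $I$ be the set of vertices strictly inside $\Delta$. Deleting $I$ yields a strictly smaller triangulation $G_1$ with the same outer triangle $\Delta_{\rm out}$, in which $\Delta$ has become an inner face, while $G_2:=G_\Delta$ is the strictly smaller triangulation with outer triangle $\Delta$ and interior $I$. By property~\ref{enum:separating-triangles-are-faces} one has $X(G)=X(G_1)\,\dot\cup\,X(G_2)$ with $\Delta_{\rm out}\in X(G_1)$ and $\Delta\in X(G_2)$, so once $1$-assignments $\phi_1$ of $G_1$ and $\phi_2$ of $G_2$ with suitable contracts are in hand, setting $\phi=\phi_1\cup\phi_2$ gives a map with $\phi(\Delta')\in V(\Delta')$ for all filled triangles, and the only vertices whose preimages get contributions from both pieces are $a,b,c$.

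The heart of the argument is to route the Hamiltonian path of $G$ (or of $G-v_1$ in case \ref{item:assign-v_2}) into the two pieces. The path visits $I$ in maximal runs, each of which enters and leaves $I$ through $\{a,b,c\}$; since those three vertices each carry at most two path-edges and $\{a,b,c\}$ is a triangle, only a handful of patterns can occur. Shortcutting each run $x$-$\cdots$-$y$ with $x,y\in\{a,b,c\}$ by the triangle edge $xy$ produces a Hamiltonian path of $G_1$ (resp.\ $G_1-v_1$) with the same endpoints and of the same type, so $G_1$ may be recursed with the very contract being proved for $G$. Dually, the runs together with their connecting vertices form the Hamiltonian structure of $G_2$: if the path threads through all of $a,b,c$ one obtains a Hamiltonian $a$-$b$-path of $G_2$ and applies \ref{item:assign-v_1-and-split} or \ref{item:assign-v_1-and-both}, whereas if one of $a,b,c$ is visited only from outside $I$ one obtains a Hamiltonian path of $G_2$ with that vertex removed and applies \ref{item:assign-v_2}. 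Choosing which contract to invoke is exactly what lets me steer $\phi_2$ to place load $0$ on the boundary vertices of $G_2$ that already receive load from $\phi_1$.

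It then remains to verify the preimage bounds for $\phi=\phi_1\cup\phi_2$. For $v\notin\{a,b,c\}$ only one of $\phi_1,\phi_2$ contributes, so the bound is inherited from the induction hypothesis. For $v\in\{a,b,c\}$ the two contributions add, and the whole purpose of the three contracts is that, for the traversal type at hand, the boundary bounds guaranteed for $G_2$ (the zeros such as $|\phi_2^{-1}(v_0)|=|\phi_2^{-1}(v_2)|=0$ in \ref{item:assign-v_1-and-both}, and the single vertex allowed load $2$) dovetail with the at-most-$1$ load that $\phi_1$ can place on an interior vertex, the slack up to $2$ being consumed precisely at the special vertex opposing the image of a filled triangle. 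I expect this bookkeeping to be the main obstacle: matching each traversal pattern of the Hamiltonian path through $\Delta$ to a contract for $G_2$ and a contract for $G_1$, checking that the shortcutting genuinely returns a path of the same type in $G_1$ and that the type read off for $G_2$ is correct, and confirming that along $a,b,c$ the combined loads never exceed the $1$-assignment bound.
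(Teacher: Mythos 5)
Your overall strategy---induction, splitting $G$ along a separating triangle, reading off which of the three traversal types the Hamiltonian path induces on the inner piece, and feeding each piece to the matching item---is the same as the paper's, and your structural observations (a single run through the interior, the identity $X(G)=X(G_1)\,\dot\cup\,X(G_2)$, shortcutting to get a path of the same type in $G_1$) are all sound. The gap is exactly where you flag it, and it is not mere bookkeeping: with your one-triangle-at-a-time recursion, which recurses on \emph{both} $G_1=G-I$ and $G_2=G_\Delta$, the induction hypothesis gives you no control over $|\phi_1^{-1}(v)|$ for $v\in\{a,b,c\}$, because the contracts (i)--(iii) only constrain the three \emph{outer} vertices of $G_1$, which are $v_0,v_1,v_2$ and not $a,b,c$. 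As a $1$-assignment, $\phi_1$ is permitted load $2$ on any vertex of $\Delta$ that happens to be special-opposing in $G_1$. Meanwhile $\phi_2(\Delta)$ must be a vertex of $\Delta$, so at least one vertex of $\Delta$ receives load $\geq 1$ from $\phi_2$, and \emph{which} vertex is forced by the traversal type: if the run covers all of $a,b,c$ with $m$ internal, then items (i)/(ii) force $\phi_2(\Delta)=m$. A concrete failure: let $u$ be the special vertex of $G$ opposing $v_1$, let several nested separating triangles all contain $u$, and let the path pass through $u$ deep inside the innermost one. Then $u$ is internal to the run through every one of these triangles; peeling the innermost $\Delta$ gives $|\phi_2^{-1}(u)|\geq 1$, while $G_1$ still contains the outer nested triangles and the inductive $\phi_1$ may legitimately return $|\phi_1^{-1}(u)|=2$, for a total of $3$, violating the $1$-assignment bound. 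No relabelling or choice among (i)--(iii) for $G_2$ can avoid this, since $m=u$ is forced.

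The missing idea is the paper's: process \emph{all inclusion-maximal} separating triangles $Z$ of $G$ in a single step, recursing only inward, so that the outer part contributes exactly one unit of load (at $\phi(\Delta_{\rm out})$) and all remaining load comes from the edge-disjoint subpaths $P_\Delta$, $\Delta\in Z$. The second key device is to orient $P$ towards the special vertex $u$ opposing $v_1$ (resp.\ $v_0$ for item (iii)) and to always place the item-(i)/(iii) load on the \emph{head} endpoint $v_2(\Delta)$ of $P_\Delta$: since every vertex other than $u$ has at most one incoming $P$-edge and the subpaths $P_\Delta$ are edge-disjoint, no vertex can be the head of two subpaths or simultaneously a head and an internal vertex, so positive loads from distinct $\Delta\in Z$ can only stack at $u$---precisely the vertex the definition of a $1$-assignment allows to carry load $2$. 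In your scheme this coordination is destroyed because the nesting at $u$ is split across two independent recursive calls whose individual budgets at $u$ already permit $2$ each. To repair your proof you would either have to adopt the paper's simultaneous treatment of $Z$, or strengthen the induction hypothesis to bound $|\phi^{-1}(w)|$ at \emph{every} vertex $w$ by a quantity tied to the local structure of $P$ at $w$ (e.g.\ the number of incoming $P$-edges), which amounts to the same thing.
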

\begin{proof}
 Let $G$ be a plane triangulation with outer triangle $\Delta_{\rm out} = v_0v_1v_2$ and let $P$ be a fixed Hamiltonian $v_0$-$v_2$-path in $G$ for items~\ref{item:assign-v_1-and-split},\ref{item:assign-v_1-and-both} or in $G-v_1$ for item~\ref{item:assign-v_2}.
 We shall prove all three items by induction on $|X(G)|$, the number of filled triangles in $G$.
 If $|X(G)| = 1$, i.e., $G$ is $4$-connected, then a desired $1$-assignment $\phi$ is given by $\phi(\Delta_{\rm out}) = v_1$ for~\ref{item:assign-v_1-and-split},\ref{item:assign-v_1-and-both} and $\phi(\Delta_{\rm out}) = v_2$ for~\ref{item:assign-v_2}, respectively.
 
 So assume that $|X(G)| \geq 2$, i.e., $G$ has at least one separating triangle.
 Let $Z \subseteq X(G) - \Delta_{\rm out}$ denote the set of all inclusion-maximal separating triangles in $G$. 
 For each $\Delta \in Z$, let $P_\Delta \subset P$ denote the inclusion-maximal subpath of $P$ consisting only of edges in $G_\Delta - E(\Delta)$.
 Then $P_\Delta$ has distinct endpoints, which both lie on $\Delta$.
 Note that $P_\Delta$ is a Hamiltonian path either in $G_\Delta$, or in $G_\Delta - v$ for one $v \in \Delta$.
 Moreover, $E(P_\Delta) \subset E(G_\Delta) - E(\Delta)$ for each $\Delta \in Z$ and thus $E(P_\Delta) \cap E(P_{\Delta'}) = \emptyset$ for $\Delta \neq \Delta' \in Z$, by~\ref{enum:parts-form-partition}.
 
 Let $u$ denote the special vertex of $G$ opposing $v_1$ for items~\ref{item:assign-v_1-and-split},\ref{item:assign-v_1-and-both} and opposing $v_0$ for item~\ref{item:assign-v_2}.
 We endow the path $P$ in $G$ with an orientation by orienting every edge towards $u$.
 This implies an orientation of $P_\Delta$ for each $\Delta \in Z$.
 Now depending on $P_\Delta$ and its orientation, we consider a $1$-assignment $\phi_\Delta$ of $G_\Delta$ for each $\Delta \in Z$, which we obtain by induction.
 
 \begin{description}
  \item[Case 1: $u$ is an inner vertex of $P_\Delta$.]{\ \\}
   Denote by $v_0(\Delta)$ and $v_2(\Delta)$ be the endpoints of $P_\Delta$.
   Since $u$ is a special vertex of $G$, we have $\Delta = v_0(\Delta)uv_2(\Delta)$.
   (Recall that, by definition, special vertices are not contained in any separating triangle of $G$.)
   In particular $P_\Delta$ is a Hamiltonian path in $G_\Delta$.
   Then by induction (item~\ref{item:assign-v_1-and-both}) there exists a $1$-assignment $\phi_\Delta$ of $G_\Delta$ with $|\phi_\Delta^{-1}(u)| \leq 2$ and $|\phi_\Delta^{-1}(v)| = 0$ for $v = v_0(\Delta),v_2(\Delta)$.
 
  \item[Case 2: $P_\Delta$ is a Hamiltonian path in $G_\Delta$.]{\ \\}
   Let $v_0(\Delta)$ and $v_2(\Delta)$ be the endpoints of $P_\Delta$ so that $P_\Delta$ is oriented from $v_0(\Delta)$ to $v_2(\Delta)$ and let $v_1(\Delta)$ be the third outer vertex of $G_\Delta$.
   Then by induction (item~\ref{item:assign-v_1-and-split}) there exists a $1$-assignment $\phi_\Delta$ of $G_\Delta$ with $|\phi_\Delta^{-1}(v)| \leq 1$ for $v = v_1(\Delta),v_2(\Delta)$ and $|\phi_\Delta^{-1}(v_0(\Delta))| = 0$.
   
  \item[Case 3: $P_\Delta$ is a Hamiltonian path in $G_\Delta - v(\Delta)$.]{\ \\}
   Let $v_0(\Delta)$ and $v_2(\Delta)$ be the endpoints of $P_\Delta$ so that $P_\Delta$ is oriented from $v_0(\Delta)$ to $v_2(\Delta)$.
   Note that $\Delta = v_0(\Delta)v_1(\Delta)v_2(\Delta)$.
   Then by induction (item~\ref{item:assign-v_2}) there exists a $1$-assignment $\phi_\Delta$ of $G_\Delta$ with $|\phi_\Delta^{-1}(v_2(\Delta))| = 1$ and $|\phi_\Delta^{-1}(v)| = 0$ for $v = v_0(\Delta),v_1(\Delta)$.
 \end{description}

 Finally, define a map $\phi : X(G) \to V(G)$ by $\phi(\Delta_{\rm out}) = v_1$ for items~\ref{item:assign-v_1-and-split},\ref{item:assign-v_1-and-both} and $\phi(\Delta_{\rm out}) = v_2$ for item~\ref{item:assign-v_2} and $\phi(\Delta) = \phi_{\Delta'}(\Delta)$ for the unique $\Delta^\star \in Z$ with $\Delta \subset G_{\Delta^\star}$.
 It is straightforward to check that $\phi$ satisfies the desired requirements.
\end{proof}

Having Lemma~\ref{lem:1-assignment}, we can deduce from Proposition~\ref{prop:decomposition-from-assignment} a $[2,3]^\star$-decomposition of any Hamiltonian triangulation.
That is, we prove Theorem~\ref{thm:hamil} stating that every Hamiltonian planar triangulation decomposes into two trees and one spanning tree of maximum degree~$3$.

\begin{proof}[Proof of Theorem~\ref{thm:hamil}]
 Let $G$ be a Hamiltonian plane triangulation with Hamiltonian cycle $C$. Consider counterclockwise consecutive vertices $v_1, v_2, v_0$ on $C$, such that $v_2$ has no neighbors in the interior of $C$. The latter is possible because $C$ with the edges in its interior is a maximal outerplanar graph and thus has a degree $2$ vertex. Thus, $v_0v_1v_2$ is a facial triangle of $G$. We show that one edge of $v_0v_1v_2$ is in no separating triangle. Suppose, that $v_1v_0$ is in a separating triangle $\Delta$. Then an edge of $\Delta$ incident with $v_1$ or $v_0$ has to lie entirely in the exterior of $C$ - say $v_1$ is that vertex. Now, the edge $v_1v_2$ cannot lie in a separating triangle because all edges that could be used to form such a triangle lie in the exterior of $C$. In the other case $v_2v_0$ lies in no separating triangle.
 
 Now, we embed $G$ such that $v_0v_1v_2=\Delta_{\rm out}$ is the outer triangle of the embedding and after possibly renaming $v_1, v_2, v_0$ we have that $G$ admits a Hamiltonian $v_0$-$v_2$-path and the edge $v_0v_1$ is in no separating triangle. Now, Lemma~\ref{lem:1-assignment} gives that $G$ admits a $1$-assignment with $\phi(\Delta_{\rm out}) = v_1$. Since $v_0v_1$ is in no separating triangle Proposition~\ref{prop:decomposition-from-assignment} yields the desired decomposition.
\end{proof}

\section{Tightness}\label{sec:tight}
In the present section we show that all our results are best-possible. More precisely, any of the classes for that we show $[2,d]^\star$-decomposability contains members that are not even $(2,d-1)$-decomposable. Moreover, there are triangulations ``close to the class'' that are not $(2,d)$-decomposable.

Recall that for a graph $G$ and integer $d \geq 0$ we say that $G$ is $(2,d)$-decomposable (respectively $(2,d)^\star$-decomposable, $[2,d]^\star$-decomposable) if the edges of $G$ can be partitioned into two forests and a graph (respectively a forest, tree) of maximum degree~$d$. For this section we will furthermore say that $G$ is $[2,d]$-decomposable if the edges of $G$ can be partitioned into two forests and a connected graph of maximum degree~$d$.

Let $G$ be a planar triangulation.
We prove in Theorem~\ref{thm:planar} that $G$ is $[2,4]^\star$-decomposable, in Theorem~\ref{thm:hamil} that if $G$ is Hamiltonian, then $G$ is even $[2,3]^\star$-decomposable, and in Theorem~\ref{thm:2t1p} that if $G$ is $4$-connected, then $G$ is even $[2,2]^\star$-decomposable.
In this section we want to argue that Theorems~\ref{thm:2t1p} and~\ref{thm:hamil} are best-possible in some sense, namely that these cannot be extended to triangulations that are ``a decent amount away from'' $4$-connectivity, respectively Hamiltonicity.
To this end let $G' \subseteq G$ be a subgraph on at least four vertices of $G$ that is itself a triangulation.
Let $n \geq 4$ be the number of vertices in $G'$, and $k$ be the number of faces of $G'$ that are not faces of $G$, i.e., $k = |Y(G')-Y(G)|$.

Note that if $G$ is $4$-connected (and thus $[2,2]^\star$-decomposable by Theorem~\ref{thm:2t1p}), then necessarily $G' = G$ and thus $k = 0$.
On the other hand, if $G$ has a Hamiltonian cycle $C$ (and is thus $[2,3]^\star$-decomposable by Theorem~\ref{thm:hamil}), then along $C$ between the interior vertices of any two faces in $Y(G')-Y(G)$ there is at least one vertex of $G'$, showing that $k \leq n$.
So we have shown $[2,d]^\star$-decomposability for $d=2,3$ in cases where $k$ is relatively small.
Next we show that indeed $[2,d]^\star$- and $(2,d)^\star$-decomposability for $d=2,3$ can be achieved \emph{only if} $k$ is relatively small.

As we also want to make a slightly stronger statement involving $[2,d]$- and $(2,d)$-decomposability for $d=2,3$, we introduce the following notation.
For fixed triangulations $G' \subseteq G$, we say that a $[2,d]$- or $(2,d)$-decomposition $(F_1,F_2,H_3)$ is \emph{special}, if for every face $\Delta \in Y(G')-Y(G)$ we have that the subgraph of $H_3$ consisting of all edges in the interior of $\Delta$ but not on the boundary of $\Delta$ is a forest.
Note in particular that every $(2,d)^\star$-decomposition and every $[2,d]^\star$-decomposition is special.

\begin{prop}\label{prop:tightness}
 Let $G' \subseteq G$ be two triangulations, $n$ be the number of vertices in $G'$, and $k$ be the number of faces of $G'$ that are not faces of $G$, i.e., $k = |Y(G')-Y(G)|$.
 Then each of the following holds:
 \begin{enumerate}[label=(\roman*)]
  \item If $k \geq 6$, then $G$ has no special $[2,2]$-decomposition.\label{enum:[2,2]}
  \item If $k \geq 9$, then $G$ has no special $(2,2)$-decomposition.\label{enum:(2,2)}
  \item If $k \geq n+6$, then $G$ has no special $[2,3]$-decomposition.\label{enum:[2,3]}
  \item If $k \geq n+9$, then $G$ has no special $(2,3)$-decomposition.\label{enum:(2,3)}
 \end{enumerate}
\end{prop}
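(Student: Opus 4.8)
The plan is to use a counting (discharging-style) argument based on the standard fact that a triangulation on $n$ vertices has exactly $2n-4$ triangular faces and $3n-6$ edges. The key global constraint is that the three forests $F_1,F_2$ and the (possibly disconnected, for the $[2,d]$/$(2,d)$ variants) subgraph $H_3$ together partition $E(G)$, and that a forest on $n$ vertices has at most $n-1$ edges. The overarching strategy for all four parts is: estimate from below how many edges $H_3$ must contain, using the structural hypothesis that many faces of $G'$ are ``filled'' in $G$ (i.e.\ lie in $Y(G')-Y(G)$), and compare this with how few edges $H_3$ can contain given its bounded maximum degree $d$ and (for the $[2,d]$ cases) its being connected or a forest-per-face.

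First I would restrict attention to the subtriangulation $G'$ and the $k$ faces of $G'$ that are not faces of $G$. The specialness hypothesis is exactly what lets me control the behavior of $H_3$ inside each such face independently: inside any $\Delta \in Y(G')-Y(G)$ the interior edges of $H_3$ form a forest, so if $\Delta$ contains $m_\Delta$ interior vertices, then $H_3$ contributes relatively many edges to $F_1 \cup F_2$ there — the point is that the two forests $F_1,F_2$ can only carry a bounded number of edges relative to the number of vertices, so a filled face forces ``excess'' edges onto $H_3$. Concretely, I would count the total number of edges $3n'-6$ (where $n'=|V(G)|$), subtract what the two forests can hold ($\le 2(n'-1)$), and conclude that $H_3$ has at least $n'-4$ edges; then I would argue that each of the $k$ filled faces forces a definite local surplus into $H_3$, so $|E(H_3)|$ must in fact grow linearly in $k$. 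Separately, the degree bound $d$ on $H_3$ caps $|E(H_3)|$ from above (roughly $\tfrac{d}{2}\,|V(H_3)|$, and in the $[2,d]$ connected case a spanning connected graph of maximum degree $d$ has its own tighter edge count). Setting the lower bound against the upper bound yields the threshold on $k$; the explicit constants $6,9,n+6,n+9$ come from carefully tracking the outer-triangle corrections and the difference between ``forest'' (part~(ii),(iv)) and ``connected'' (part~(i),(iii)).

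The four parts differ only in the two parameters governing $H_3$: whether $d=2$ or $d=3$, and whether $H_3$ is required to be merely of bounded degree (the $(2,d)$ cases, giving the larger thresholds $9$ and $n+9$) or additionally connected (the $[2,d]$ cases, giving $6$ and $n+6$). I would therefore prove a single master inequality relating $|E(H_3)|$, $k$, $n$, and $d$, and then instantiate it four times. For the $d=3$ parts the bound shifts by $+n$ because a degree-$3$ subgraph can absorb up to one extra edge per vertex compared to the degree-$2$ case — this is precisely the additive $n$ in parts~\ref{enum:[2,3]} and~\ref{enum:(2,3)}.

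The main obstacle I anticipate is making the ``each filled face forces a surplus edge into $H_3$'' step rigorous and quantitative while faces can be nested and share boundary edges with $G'$; the specialness condition is designed exactly to decouple the interiors, but I will need to handle shared boundary edges and the outer triangle without double-counting, and to verify that the surplus accumulates additively across the $k$ faces rather than being cancelled by slack elsewhere. The constants $6$ and $9$ suggest a small fixed loss (from the outer face and the ``$-4$'' in $2n-4$) that must be absorbed once globally, after which the per-face accounting is clean.
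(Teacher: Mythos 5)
Your high-level framing (a counting argument exploiting the partition into two forests plus a bounded-degree part, with specialness used to control each face of $Y(G')-Y(G)$ separately) points in the right direction, but the central quantitative step you propose --- ``each filled face forces a definite local surplus of edges into $H_3$'' --- is false, and no purely edge-count version of the argument can work. Globally, two forests plus a max-degree-$2$ subgraph on $n'=|V(G)|$ vertices can hold up to $2(n'-1)+n'=3n'-2>3n'-6$ edges, so there is never a global edge deficit; locally, a face $\Delta$ with $m$ interior vertices contributes $3m$ new edges, while the two forests can absorb $2m$ of them and a degree-$2$ graph the remaining $m$, so there is no forced local surplus either. The true obstruction is about \emph{connectivity}, not edge counts: if $F_1$ restricted to $G[\Delta]$ absorbs its full quota of edges, it must connect two vertices of $\Delta$ to each other, and this consumes part of a bounded global budget of tree components. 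The paper's proof makes this precise via two claims: first, a partition of an $n$-vertex, $(kn-x)$-edge graph into $k$ spanning subgraphs whose components are trees and cycles has exactly $x$ tree components in total; second, for each $\Delta\in Y(G')-Y(G)$ either two vertices of $\Delta$ lie in one component of $F_1|_{G[\Delta]}$, or of $F_2|_{G[\Delta]}$, or some vertex of $\Delta$ meets an edge of $H_3|_{G[\Delta]}$ (this is where specialness is used, to make $H_3|_{G[\Delta]}$ a forest so that the component identity applies). The number of faces of the first two types is then bounded by $c(F_i')-c(F_i)$, where $c(\cdot)$ counts tree components meeting $G'$, and $c(F_1')+c(F_2')+c(H_3')=6$ on $G'$; the number of faces of the third type is bounded via the degree bound on $H_3$ at vertices of $G'$. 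None of this machinery appears in your sketch.

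A secondary issue: your explanation of the $+n$ shift for $d=3$ (``one extra edge per vertex'') conflates $|V(G)|$ with $n=|V(G')|$. In the actual argument the degree bound is applied only to incidences $(v,e)$ with $v\in V(G')$, giving capacity $3n$ there; a global bound of the form $|E(H_3)|\le \tfrac{3}{2}|V(G)|$ would not localize to $G'$ and would not produce the stated thresholds. To repair your proof you would need to replace the edge-surplus mechanism with the component-counting trichotomy described above; as written, the argument does not establish any of the four bounds.
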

\begin{proof}
 We shall prove all four items by contraposition, i.e., we assume that $G$ has a special $(2,d)$- or $[2,d]$-decomposition for $d \in \{2,3\}$ and derive from this an upper bound on $k$.
 So let $(F_1,F_2,H_3)$ be a special decomposition of $E(G)$ into two forests $F_1,F_2$ and a third graph $H_3$ of maximum degree~$d$.
 As we shall count components, we consider each of $F_1,F_2,H_3$ to be a spanning subgraph of $G$, i.e., to contain all the vertices of $G$.
  
 \begin{claim}\label{claim:tree-components}
  Let $J$ be a graph with $n$ vertices and $kn-x$ edges.
  If the edges of $J$ are partitioned into $k$ spanning subgraphs whose components are only trees and cycles, then the total number of tree components in these $k$ graphs is equal to $x$.
 \end{claim}

 Indeed, it is enough to observe that a spanning subgraph $J'$ of $J$ whose components are trees and cycles contains exactly $n-c$ edges of $G$, where $c$ is the number of tree components of $J'$.
 
 \medskip

 For a face $\Delta$ in $Y(G') - Y(G)$, let $G[\Delta]$ denote the maximal subgraph of $G$ whose vertices are on or inside $\Delta$ and whose edges have at least one endpoint inside $\Delta$. In other words $G[\Delta]=G_{\Delta}-E(\Delta)$.
 
 \begin{claim}\label{claim:in-Delta}
  For every $\Delta \in Y(G') - Y(G)$ at least one of the following holds:
  \begin{itemize}
   \item Two vertices of $\Delta$ are in the same component of $F_1|_{G[\Delta]}$.
   \item Two vertices of $\Delta$ are in the same component of $F_2|_{G[\Delta]}$.
   \item A vertex of $\Delta$ is incident to an edge in $H_3|_{G[\Delta]}$.
  \end{itemize}
 \end{claim} 
 
 Indeed, if $G[\Delta]$ has $n$ vertices, then it has $3n-9$ edges.
 As the edges of $G[\Delta]$ are partitioned into three forests $F_1|_{G[\Delta]},F_2|_{G[\Delta]},H_3|_{G[\Delta]}$ (the latter graph is a forest as the decomposition is special), by Claim~\ref{claim:tree-components} there are in total exactly $9$ components.
 Now if the first item does not hold, then $F_1|_{G[\Delta]}$ has at least $3$ components.
 Similarly, if the second items does not hold, then $F_2|_{G[\Delta]}$ has at least $3$ components.
 Finally, if the third item does not hold, then $H_3|_{G[\Delta]}$ has at least $4$ components.
 Thus if none of the items hold, then there are at least $10$ components in total, which is a contradiction and proves the claim.
 
 \medskip

 Taking the restrictions of $F_1,F_2,H_3$ to the subgraph $G'$ of $G$, we obtain corresponding forests $F'_1,F'_2$ and graph $H'_3$ partitioning $E(G')$.
 For a subgraph $J \in \{F_1,F_2,H_3,F'_1,F'_2,H'_3\}$ of $G$ let $c(J)$ denote the number of its tree components that contain a vertex of $G'$.
 The crucial insight is that the number of faces $\Delta \in Y(G')-Y(G)$ for which the first, respectively second, item of Claim~\ref{claim:in-Delta} holds, is at most $c(F'_1)-c(F_1)$, respectively $c(F'_2)-c(F_2)$.
 Every further face $\Delta \in Y(G')-Y(G)$ contributes at least one edge in $E(H_3) - E(H'_3)$ incident to a vertex of $G'$.
  
 \begin{description}
  \item[Case~1: $d=2$.]{\ \\}
   Claim~\ref{claim:tree-components} applied to the partition $(F'_1,F'_2,H'_3)$ of $G'$ gives
   \[
    c(F'_1) + c(F'_2) + c(H'_3) = 6.
   \]
   Since $G' \subseteq G$, we have $c(F'_1) \geq c(F_1) \geq 1$, $c(F'_2) \geq c(F_2) \geq 1$, and $c(H'_3) \geq c(H_3)$, which implies that 
   \[
    c(H_3) \leq c(H'_3) = 6 - c(F'_1) - c(F'_2) \leq 4.
   \]
   For at most $c(F'_1)- c(F_1)$, respectively $c(F'_2) - c(F_2)$ faces $\Delta \in Y(G')-Y(G)$ the first, respectively second, item of Claim~\ref{claim:in-Delta} holds.
   For at most $c(H'_3) - c(H_3)$ faces $\Delta \in Y(G')-Y(G)$ two vertices of $\Delta$ are in the same component of $H_3|_{G[\Delta]}$.
   As $H_3$ has maximum degree at most~$d=2$, for at most $2c(H_3)$ further faces $\Delta \in Y(G')-Y(G)$ the last item of Claim~\ref{claim:in-Delta} holds.
   Hence, the total number of faces in $Y(G') - Y(G)$ is at most 
   \begin{eqnarray*}
    \quad c(F'_1) - c(F_1) + c(F'_2) - c(F_2) + c(H'_3) - c(H_3) + 2c(H_3) \hspace{4em} \\
     \leq 6 - 2 + c(H_3) = 4 + c(H_3).
   \end{eqnarray*}
   Thus $k \leq 4+c(H_3) = 5$ for $c(H_3)=1$ which proves~\ref{enum:[2,2]} and $k \leq 4+c(H_3) \leq 8$ otherwise, which proves~\ref{enum:(2,2)}.
   
  \item[Case~2: $d=3$.]{\ \\}
   As $H'_3$ has maximum degree at most~$d=3$, we have $|E(H'_3)| = 3n/2 - y$ for some $y$. 
   Claim~\ref{claim:tree-components} applied to the partition $(F'_1,F'_2)$ of $G' - E(H'_3)$ gives $c(F'_1) + c(F'_2) = n/2 + 6 - y$ and thus
   \[
    y = n/2 + 6 - c(F'_1) - c(F'_2) \leq n/2 + 4.
   \]
   Moreover, $c(H'_3) \geq \max\{1,n-|E(H'_3)|\}$ and thus
   \[
    y = \frac{3}{2}n - |E(H'_3)| \leq n/2 + c(H'_3).
   \]
   Clearly, $c(F_1), c(F_2) \geq 1$, and thus for at most $c(F'_1)- c(F_1) + c(F'_2) - c(F_2) \leq n/2 + 4 - y$ faces $\Delta \in Y(G')-Y(G)$ the first or second item of Claim~\ref{claim:in-Delta} holds.
   
   Now consider the set $S = \{(v,e) \mid v \in V(G'), e \in E(H_3), v \in e\}$.
   As $H_3$ has maximum degree at most~$d=3$, we have $|S| \leq 3n$.
   There are exactly $2|E(H'_3)| = 3n-2y$ pairs $(v,e) \in S$ with $e \in E(H'_3)$.
   For at most $c(H'_3) - c(H_3)$ faces $\Delta \in Y(G')-Y(G)$ two vertices of $\Delta$ are in the same component of $H_3|_{G[\Delta]}$, which corresponds to at least $2(c(H'_3)-c(H_3))$ further pairs $(v,e) \in S$ with $e \in E(G[\Delta])$ for such faces $\Delta$.
   Every further face $\Delta \in Y(G')-Y(G)$ for which the last item of Claim~\ref{claim:in-Delta} holds corresponds to at least one further pair $(v,e) \in S$ with $e \in E(G[\Delta])$, giving that there are at most $|S| - (3n-2y) - 2(c(H'_3)-c(H_3)) = 2(y - c(H'_3) + c(H_3))$ such faces $\Delta \in Y(G')-Y(G)$.
   Hence, for the total number $k$ of faces in $Y(G') - Y(G)$ we have 
   \begin{align*}
    k & \leq (n/2 + 4 - y) + (c(H'_3) - c(H_3)) + 2(y - c(H'_3) + c(H_3)) \\
    & = n/2 + 4 + y + (c(H_3) - c(H'_3)) \\
    & \leq n + 4 + \min\{c(H_3),4\}.
   \end{align*}   
   Thus $k \leq n+4+\min\{c(H_3),4\} = n+5$ for $c(H_3)=1$ which proves~\ref{enum:[2,3]} and $k \leq n+4+\min\{c(H_3),4\} \leq n+8$ otherwise, which proves~\ref{enum:(2,3)}.\qedhere
 \end{description}
\end{proof}

Let us mention that Proposition~\ref{prop:tightness} is tight in the following sense:
Figure~\ref{fig:non-tight-examples} shows triangulations $G' \subset G$ with $k = |Y(G')-Y(G)|$ being one less than the bound in Proposition~\ref{prop:tightness} and yet $G$ has a special $[2,d]$- or $(2,d)$-decomposition (even stronger: $[2,d]^\star$-, respectively $(2,d)^\star$-decomposition) for $d \in \{2,3\}$ as in the respective case in the theorem.

\begin{figure}[tb]
 \centering
 \includegraphics{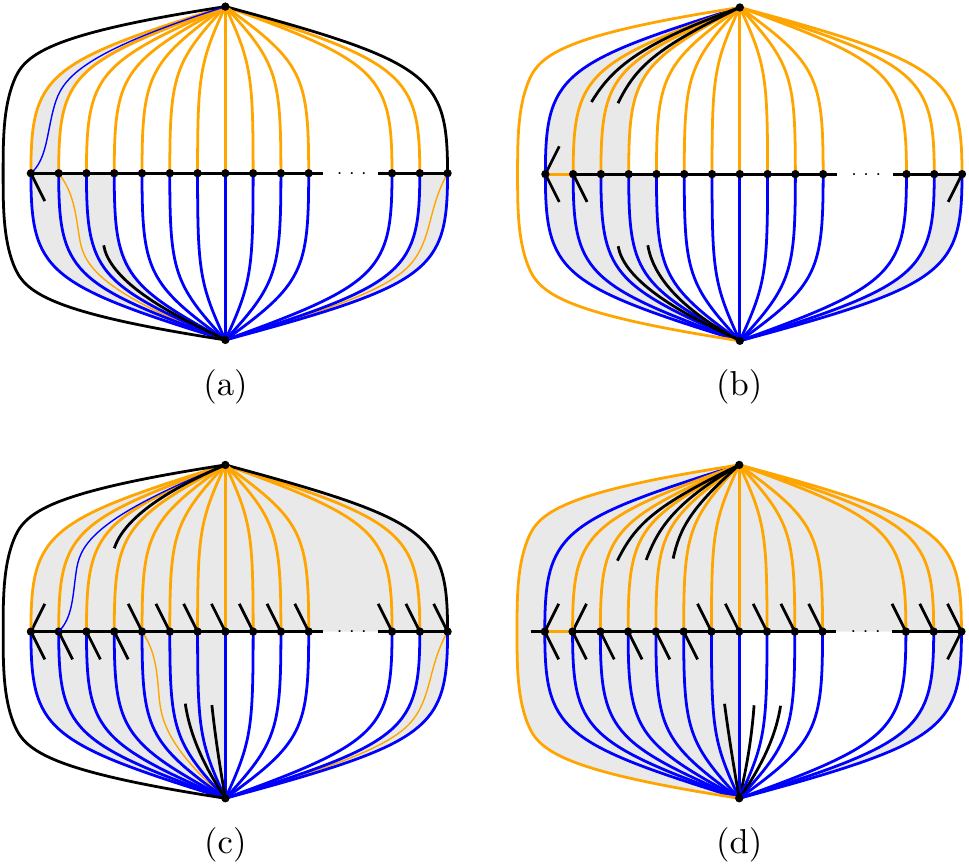}
 \caption{Examples showing that the bounds on $k = |Y(G')-Y(G)|$ in Proposition~\ref{prop:tightness} are best-possible.
  The triangulation $G'\subset G$ is drawn with thick edges, gray areas indicate faces in $Y(G')-Y(G)$, and edge colors indicate a claimed decomposition.
  (a) $k = 5$ and $G$ is $[2,2]^\star$-decomposable.
  (b) $k = 8$ and $G$ is $(2,2)^\star$-decomposable.
  (c) $k = n + 5$ and $G$ is $[2,3]^\star$-decomposable.
  (d) $k = n + 8$ and $G$ is $(2,3)^\star$-decomposable.}
 \label{fig:non-tight-examples}
\end{figure}

Finally, Proposition~\ref{prop:tightness} implies Theorem~\ref{thm:best-possible} stating that \ref{enum:4-connected-not-2,1} some $4$-connected triangulations are not $(2,1)$-decomposable, \ref{enum:Hamiltonian-not-2,2} some Hamiltonian triangulations are not $(2,2)$-decomposable, and \ref{enum:planar-not-2,3} some planar triangulations are not $(2,3)$-decomposable.

\begin{proof}[Proof of Theorem~\ref{thm:best-possible}]
 The first item is a simple counting argument.
 
 \begin{enumerate}[label = (\roman*)]
  \item Let $G_1$ be any $4$-connected triangulation on $n \geq 9$ vertices.
   Note that every $n$-vertex $(2,1)$-decomposable graph has at most $2(n-1) + n/2$ edges, since it decomposes into two forests and a matching.
   On the other hand, $G_1$ has $3n-6$ edges, which is strictly more than $2(n-1)+n/2$ for $n \geq 9$, and thus $G_1$ is not $(2,1)$-decomposable.
 \end{enumerate}
 
 The key observation for~\ref{enum:Hamiltonian-not-2,2} and~\ref{enum:planar-not-2,3} is that if $G' \subseteq G$ are triangulations and every face $\Delta \in Y(G') - Y(G)$ contains exactly one vertex of $V(G) - V(G')$, then every $(2,d)$-decomposition of $G$ is special.
 Hence we can use Proposition~\ref{prop:tightness} to argue that such $G$ admits no $(2,d)$-decomposition for $d \in \{2,3\}$, provided $k = |Y(G')-Y(G)|$ is big enough.
 
 \begin{enumerate}[label = (\roman*), start = 2]
  \item Let $G'_2$ be any Hamiltonian triangulation on an even number $n \geq 18$ vertices and let $C$ be a Hamiltonian cycle in $G'_2$.
   Now for every other edge $e$ in $C$ add a new vertex $v_e$ in one of the faces incident to $e$, making $v_e$ adjacent to all vertices of this face.
   As we picked every other edge, we have $v_e \neq v_{e'}$ for $e \neq e'$.
   The resulting graph $G_2$ is a triangulation satisfying $G'_2 \subset G_2$ and $k =  n/2  \geq 9$.
   Hence, by Proposition~\ref{prop:tightness}\ref{enum:(2,2)} $G_2$ has no special $(2,2)$-decomposition, and as every $\Delta \in Y(G'_2) - Y(G_2)$ contains exactly one vertex of $V(G_2) - V(G'_2)$, $G_2$ has no $(2,2)$-decomposition at all.
   Finally, $G_2$ is Hamiltonian as the cycle $C$ can be easily rerouted to also contain every vertex $v_e \in V(G_2) - V(G'_2)$. 
   
  \item Let $G'_3$ be any triangulation on $n \geq 12$ vertices.
   Let $G_3$ be the triangulation arising from $G'_3$ by adding a new vertex in each face, making it adjacent to all vertices of this face.
   Note that $G'_3 \subset G_3$ and $k = |Y(G'_3)-Y(G_3)| = 2n-3 \geq n+12-3 = n+9$.
   Hence, by Proposition~\ref{prop:tightness}\ref{enum:(2,3)} $G_3$ has no special $(2,3)$-decomposition, and as every $\Delta \in Y(G'_3) - Y(G_3)$ contains exactly one vertex of $V(G_3) - V(G'_3)$, $G_3$ has no $(2,3)$-decomposition at all.
 \end{enumerate}
\end{proof}

\section{Conclusions}\label{sec:conc}

Gon\c{c}alves~\cite{Gon-09} showed that every planar graph admits a $(2,4)^\star$-decomposition.
In this paper we showed that structural properties of planar triangulations allow for $[2,d]^\star$-decompositions with $d < 4$.
Moreover, note that our results are slightly stronger than just showing decomposability into forests, since for planar triangulations we give decompositions into trees and the bounded degree tree is spanning. 
In light of these results and the Nine Dragon Tree Theorem one can ask under whhat conditions a graph is coverable by $k$ trees and a bounded degree $d$ tree or connected graph.

\subsection*{Acknowledgments}

We would like to thank the anonymous referees for carefully checking an earlier version of this paper and their comments, which have significantly improved the presentation of our results.

\bibliographystyle{abbrv}
\bibliography{lit}

\end{document}